\newtheorem{thm}{Theorem}[section]
\newtheorem{lemma}[thm]{Lemma}
\newtheorem{proposition}[thm]{Proposition}
\newtheorem{corollary}[thm]{Corollary}
\theoremstyle{remark}
\newtheorem{defi}[thm]{Definition}
\numberwithin{equation}{section}
\newcommand{\R}{\mathbb{R}}
\newcommand{\C}{\mathbb{C}}
\newcommand{\dd}{\mathop{}\!\mathrm{d}}
\newcommand{\sph}{\mathbb{S}} 
\newcommand{\I}{\mathrm{i}} % imaginary unit 
\newcommand{\eps}{\varepsilon}  % shortcut for epsilon
\newcommand{\D}{\slashed{D}} % Dirac operator 
\newcommand{\Lap}{L}  % conformal laplacian
\newcommand{\parenthesis}[1]{\left(#1\right)} % round bracket
\DeclareMathOperator{\Spin}{Spin}
\DeclareMathOperator{\SO}{SO}
\DeclareMathOperator{\End}{End}
\DeclareMathOperator{\Id}{Id}
\DeclareMathOperator{\Conf}{Conf} % Conformal group
\DeclareMathOperator{\YM}{Y} % Yamabe invariant 
\renewcommand{\geq}{\geqslant}
\renewcommand{\leq}{\leqslant}
\newcommand\vol{{\operatorname{vol}}}
\newcommand\N{{\mathbb{N}}}
\newcommand{\be}{\begin{equation}}% 
\newcommand{\ee}{\end{equation}}
\title[Dirac-Einstein bubbles]{Some properties of Dirac-Einstein bubbles}
\author[W. Borrelli]{William Borrelli}
\address[W. Borrelli]{Scuola Normale Superiore, Centro De Giorgi, Piazza dei Cavalieri 3, I-56100 , Pisa, Italy.} 
\email{william.borrelli@sns.it}
\author[A. Maalaoui]{Ali Maalaoui}
\address[A. Maalaoui]{Department of mathematics and natural sciences, American University of Ras Al Khaimah, PO Box 10021,
Ras Al Khaimah, UAE.} 
\email{ali.maalaoui@aurak.ac.ae}
\date{\today}
\begin{document}

\begin{abstract}
 We prove smoothness and provide the asymptotic behavior at infinity of solutions of Dirac-Einstein equations on $\R^3$, which appear in the bubbling analysis of conformal Dirac-Einstein equations on spin 3-manifolds. Moreover, we classify ground state solutions, proving that the scalar part is given by Aubin-Talenti functions, while the spinorial part is the conformal image of $-\frac{1}{2}$-Killing spinors on the round sphere $\sph^3$.
 \end{abstract}

\maketitle

{\footnotesize
\emph{Keywords}: Dirac-Einstein equations, ground state solutions, Killing spinors, conformally covariant equations.

\medskip

\emph{2010 MSC}: 53C27, 58J90, 81Q05.
}

%%%%%%%%%%%%%%%%%%%%%%%%%%%%%%%%%%%%%%%%%%%%%%%%%%%%%%%%%%%%%%%%%%%%%%%%%%%%%%%%%%%%%%%
%%%%%%%%%%%%%%%%%%%%%%%%%%%%%%%%%%%%%%%%%%%%%%%%%%%%%%%%%%%%%%%%%%%%%%%%%%%%%%%%%%%%%%%
%%%%%%%%%%%%%%%%%%%%%%%%%%%%%%%%%%%%%%%%%%%%%%%%%%%%%%%%%%%%%%%%%%%%%%%%%%%%%%%%%%%%%%%
\section{Introduction}
Let $(M,g)$ be a three dimensional closed oriented Riemannian manifold. It is well-known \cite[page 87]{LawsonMichelsohn} that such manifold admits a spin structure $\sigma$.

Consider the energy functional $\mathcal{E}$
$$\mathcal{E}(g,\psi)=\int_{M}R_{g}dv_{g}+\int_{M}\langle \D_{g}\psi,\psi \rangle-\langle \psi,\psi\rangle\, \dd\vol_g,$$
where $g$ is a Riemannian metric on $M$, $\psi$ is a spinor in the spin bundle $\Sigma M$ on $M$, $R_g$ is the scalar curvature, $\D_{g}$ is the Dirac operator and $\langle \cdot, \cdot \rangle$ is the compatible Hermitian metric on $\Sigma M$. The functional $\mathcal{E}$ generalizes the classical Hilbert-Einstein functional and it is invariant under the group of diffeomorphisms of $M$ as well. This model was investigated in \cite{Belg, Fin, Kim}, where the authors study it in its full generality and provide some properties of the critical points.\\
In \cite{Maalaoui-Martino-JDE19} the restriction of this functional to a conformal class of the metric was studied. This conformal restriction leads to the functional
\begin{equation}\label{eq:Eaction}
E(u,\psi)=\frac{1}{2}\left(\int_{M}u L_{g}u+\langle \D_{g} \psi,\psi \rangle -|u|^{2}|\psi|^{2}\, \dd\vol_g\right).
\end{equation}
where $L_{g}$ is the conformal Laplacian of the metric $g$. This energy functional can also be seen as the three dimensional version of the super-Liouville equation investigated in \cite{jevnikar2019existence,jost2007super,jost2009energy}. One then can easily see that the critical points of this functional satisfy the following Euler-Lagrange equation
\begin{equation}\label{el}
\left\{\begin{array}{ll}
L_{g} u=|\psi|^{2}u\\
& \text{on } M .\\
\D_{g}\psi=|u|^{2}\psi
\end{array}
\right.
\end{equation}
This system is actually critical since the functional is conformally invariant. This conformal invariance results in a bubbling phenomenon as detailed in \cite{Maalaoui-Martino-JDE19}, where the authors proved the following 

\begin{thm}\label{first}
Let us assume that $M$ has a positive Yamabe constant $Y_g(M)$ and let $(u_{n},\psi_{n})$ be a Palais-Smale sequence for $E$ at level $c$. Then there exist $u_{\infty}\in C^{2,\alpha}(M)$, $\psi_{\infty}\in C^{1,\beta}(\Sigma M)$ such that $(u_{\infty},\psi_{\infty})$ is a solution of $(\ref{el})$, $m$ sequences of points $x_{n}^{1},\cdots, x_{n}^{m} \in M$ such that $\lim_{n\to \infty}x_{n}^{k}= x^{k}\in M$, for $k=1,\dots,m$ and $m$ sequences of real numbers $R_{n}^{1},\cdots, R_{n}^{m}$ converging to zero, such that:
\begin{itemize}
\item[i)]   $\displaystyle u_{n}=u_{\infty}+\sum_{k=1}^{m} v_{n}^{k}+o(1)$  in  $H^{1}(M)$,
\item[ii)]  $\displaystyle \psi_{n}=\psi_{\infty}+\sum_{k=1}^{m}\phi_{n}^{k}+o(1)$ in $H^{\frac{1}{2}}(\Sigma M)$,
\item[iii)] $\displaystyle E(u_{n},\psi_{n})=E(u_{\infty},\psi_{\infty})+\sum_{k=1}^{m}E_{\mathbb{R}^{3}}(U_\infty^{k},\Psi_\infty^{k})+o(1)$,
\end{itemize}
\end{thm}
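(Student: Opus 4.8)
The plan is to run Struwe's concentration--compactness scheme, adapted to the coupled conformal-Laplacian/Dirac system \eqref{el} and to the strong indefiniteness of $E$ in the spinorial variable. Write $E(u,\psi)=\tfrac12\bigl(Q(u,\psi)-\int_M|u|^2|\psi|^2\,dv_g\bigr)$ with $Q(u,\psi)=\int_M\bigl(uL_gu+\langle\D_g\psi,\psi\rangle\bigr)\,dv_g$ the quadratic part. First I would prove that the Palais--Smale sequence is bounded in $H^1\times H^{1/2}$: from the identity $2E(u_n,\psi_n)-dE(u_n,\psi_n)[(u_n,\psi_n)]=\int_M|u_n|^2|\psi_n|^2\,dv_g$ one gets $\int_M|u_n|^2|\psi_n|^2\,dv_g=2c+o(1)+o(\|(u_n,\psi_n)\|)$, hence $Q(u_n,\psi_n)=4c+o(1)+o(\|(u_n,\psi_n)\|)$; since $Y_g(M)>0$ the form $u\mapsto\int_M uL_gu$ is coercive on $H^1$, and the indefinite Dirac term is handled by splitting $\psi_n=\psi_n^++\psi_n^-+\psi_n^0$ along the spectral subspaces of $\D_g$ and testing $dE(u_n,\psi_n)$ against $\psi_n^+-\psi_n^-$ to absorb the negative part; equivalently, once $u_n$ is bounded in $H^1$ one bounds $\psi_n$ a posteriori from $\D_g\psi_n=|u_n|^2\psi_n+o(1)$ and $\|\psi\|_{H^{1/2}}\lesssim\|\D_g\psi\|_{H^{-1/2}}+\|\psi\|_{L^2}$. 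Along a subsequence $u_n\rightharpoonup u_\infty$ in $H^1$, $\psi_n\rightharpoonup\psi_\infty$ in $H^{1/2}$, with strong convergence in every subcritical $L^p$ and a.e.; passing to the limit in the weak form of $dE(u_n,\psi_n)\to0$ shows $(u_\infty,\psi_\infty)$ solves \eqref{el}, and its regularity $u_\infty\in C^{2,\alpha}$, $\psi_\infty\in C^{1,\beta}$ is the elliptic bootstrap used for smoothness of solutions.

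Next, a Brezis--Lieb lemma for the quartic functional $\int_M|u|^2|\psi|^2\,dv_g$ together with the bilinearity of $Q$ yields $E(u_n,\psi_n)=E(u_\infty,\psi_\infty)+E(\tilde u_n,\tilde\psi_n)+o(1)$ with $\tilde u_n=u_n-u_\infty\rightharpoonup0$, $\tilde\psi_n=\psi_n-\psi_\infty\rightharpoonup0$ and $dE(\tilde u_n,\tilde\psi_n)\to0$, so one may assume $u_\infty=0$, $\psi_\infty=0$. If $(u_n,\psi_n)\to0$ strongly we are done with $m=0$; otherwise a definite amount of energy escapes and I would localize it using Lions' concentration function for the measures $|u_n|^2|\psi_n|^2\,dv_g$ and $|\nabla u_n|^2\,dv_g$, producing concentration points $x_n^1\to x^1$ and radii $R_n^1\to0$ capturing a fixed fraction of this mass in $B_{R_n^1}(x_n^1)$. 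In $R_n^1$-normal coordinates and a Bourguignon--Gauduchon (synchronous) trivialization of $\Sigma M$, I would set $U_n^1(y)=(R_n^1)^{1/2}u_n(\exp_{x_n^1}(R_n^1y))$ and $\Psi_n^1(y)=R_n^1\psi_n(\exp_{x_n^1}(R_n^1y))$; the conformal weights $\tfrac{n-2}{2}=\tfrac12$ and $\tfrac{n-1}{2}=1$ in dimension $3$ make the $H^1$ and $H^{1/2}$ norms and the quartic term scale-invariant. The rescaled metrics converge in $C^k_{loc}$ to the Euclidean one, so along a subsequence $(U_n^1,\Psi_n^1)\rightharpoonup(U_\infty^1,\Psi_\infty^1)$ in $H^1_{loc}\times H^{1/2}_{loc}(\R^3)$, and $(U_\infty^1,\Psi_\infty^1)$ solves the flat version of \eqref{el} on $\R^3$; the choice of scale forces $(U_\infty^1,\Psi_\infty^1)\neq0$, hence $E_{\R^3}(U_\infty^1,\Psi_\infty^1)\geq\eta_0$ for a universal $\eta_0>0$ coming from the sharp $\mathcal{D}^{1,2}(\R^3)$ and homogeneous $H^{1/2}(\R^3)$ inequalities.

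I would then iterate: subtract from $(u_n,\psi_n)$ a cut-off rescaled copy $(v_n^1,\phi_n^1)$ of $(U_\infty^1,\Psi_\infty^1)$ concentrated at $(x_n^1,R_n^1)$, and verify, via Brezis--Lieb once more and the bubble-separation alternatives $R_n^j/R_n^k+R_n^k/R_n^j\to\infty$ or $\dist(x_n^j,x_n^k)/(R_n^j+R_n^k)\to\infty$ (which kill all cross terms in the $H^1$, $H^{1/2}$ and $E$ expansions), that the remainder is again a Palais--Smale sequence, now at level $c-E_{\R^3}(U_\infty^1,\Psi_\infty^1)$, converging weakly to $0$, and that (i)--(iii) hold with one bubble extracted. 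Since each extracted bubble carries energy at least $\eta_0$, after at most $m\le c/\eta_0$ steps the remainder converges strongly to $0$ in $H^1\times H^{1/2}$, completing the decomposition.

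The main obstacle, and what sets this apart from the scalar Yamabe-type problem, is the strong indefiniteness of $E$ in $\psi$: both the a priori bound and the Brezis--Lieb/energy-orthogonality arguments must be carried out on the infinite-dimensional negative spectral subspace of $\D_g$, so one never has coercivity directly but only after pairing $\psi^+$ against $\psi^-$. Moreover the nonlocality of $\D_g$ and of $H^{1/2}$ makes the localization genuinely delicate, since multiplying a spinor by a cut-off $\chi$ produces the zeroth-order commutator $[\D_g,\chi]$, which is Clifford multiplication by $\nabla\chi$ and must be controlled in the $L^3$--$L^{3/2}$ duality underlying the critical Dirac nonlinearity; keeping track of it is what makes the ``weak-to-strong'' step in the iteration work. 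Establishing the universal gap $\eta_0$ for nontrivial Euclidean bubbles --- the input guaranteeing that the number of bubbles is finite --- is the other essential point and again rests on the sharp Sobolev and Dirac inequalities on $\R^3$.
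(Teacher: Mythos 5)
This theorem is not proved in the present paper: it is quoted verbatim from \cite{Maalaoui-Martino-JDE19}, and the article under review only uses it as input for its classification of the Euclidean bubbles. There is therefore no in-paper proof to set your argument against. That said, your sketch is the standard Struwe concentration--compactness scheme that the cited reference follows: a priori bounds for the Palais--Smale sequence, weak limit and a Brezis--Lieb splitting reducing to $(u_\infty,\psi_\infty)=(0,0)$, rescaling in normal coordinates with a Bourguignon--Gauduchon trivialization and the conformal weights $\tfrac{n-2}{2}=\tfrac12$, $\tfrac{n-1}{2}=1$, extraction of a nontrivial solution of the limiting Euclidean system, an iteration killed off by a universal energy gap, and the bubble-separation dichotomy to cancel cross terms. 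You also correctly identify the two real obstacles: strong indefiniteness of $E$ in $\psi$ (so coercivity only appears after pairing $\psi^+$ against $\psi^-$), and the nonlocality of $\D_g$/$H^{1/2}$, forcing control of the commutator $[\D_g,\chi]$ in the $L^3$--$L^{3/2}$ duality when one localizes.

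Two places deserve more care than the proposal gives them. First, the energy gap $\eta_0$: in this setting it is precisely the lower bound \eqref{eq:lowerbound}, $E_{\R^3}(U,\Psi)\geq \widetilde{\YM}(\sph^3,g_0)$, which is a substantive result proved in \cite{Maalaoui-Martino-JDE19} via the Hijazi inequality; since $E_{\R^3}$ is indefinite, quoting ``sharp Sobolev and Dirac inequalities on $\R^3$'' does not by itself produce a positive lower bound on the energy of a nontrivial bubble, and you would need to invoke or reprove this estimate. Second, the a priori bound on $\psi_n$: you introduce the kernel component $\psi_n^0$ of the spectral splitting but never control it. The quadratic form $Q$ vanishes on $\ker\D_g$, so neither the identity $2E-dE[\,\cdot\,]$ nor the test spinor $\psi_n^+-\psi_n^-$ sees that component; it must be bounded separately, e.g.\ by projecting the equation $\D_g\psi_n=|u_n|^2\psi_n+o(1)$ onto the finite-dimensional kernel and using the $H^1$ bound on $u_n$. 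On a generic closed spin $3$-manifold the kernel is trivial, but that cannot be assumed in general.
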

Thanks to a symmetry trick as in \cite{Maalaoui,maalaoui2012changing}, it is possible to show the existence of sign-changing solutions. This relies mainly on the fact that every non-trivial solution $(U,\Psi)$ on $\R^{3}$ (or on the sphere by stereographic projection) satisfies the following lower bound, proved in \cite{Maalaoui-Martino-JDE19}:
 \be\label{eq:lowerbound}
 E_{\R^{3}}(U,\Psi)\geq \widetilde{\YM}(\sph^3,g_0):=\YM(\sph^3,g_0)\lambda^+(\sph^3,g_0)\,,
 \ee
 where $\YM(\sph^3,g_0)$ is the Yamabe invariant of the round 3-sphere, and 
 \[
 \lambda^+(\sph^3,g_0):=\inf_{g\in[g_0]}\lambda_1(\sph^3,g)\vol(\sph^3,g)^{1/3}
 \]
 is the Hijazi-B\"ar-Lott invariant \cite[\S 8.5]{diracspectrum}. Here $\lambda_1(\sph^3,g)$ denotes the first positive Dirac eigenvalue on $(\sph^3,g)$ and $[g_0]$ is the conformal class of the round metric.

In order for one to have a general existence theorem, one needs to understand these bubbles and classify them as is the case of the Yamabe problem in \cite{caffarelli1989asymptotic}.
\subsection{Main results}
We are then interested in the following coupled system
\be\label{eq:debubbles}
\left\{\begin{aligned}
   -c_3\Delta u=\vert\psi\vert^2 u\,, \\ 
   \D\psi=\vert u\vert^2\psi \,
\end{aligned}\right.\,\qquad \mbox{on $\R^3$.}
\ee
with $c_3=6$, that describes blow-up profiles stated above since $-c_3\Delta=\Lap_{g_{\R^3}}$. The first main result of the paper deals with regularity and asymptotic behavior of distributional solution to \eqref{eq:debubbles}.

\begin{thm}\label{thm:regdecay}
Let $(u,\psi)\in L^{6}(\R^3)\times L^{3}(\Sigma\R^3)$ be a distributional solution to \eqref{eq:debubbles}, then
\be\label{eq:regbubbles}
(u,\psi)\in C^{\infty}(\R^3)\times C^{\infty}(\Sigma\R^{3}) \, .
\ee
Moreover, the following complete asymptotic expansions hold.

There exist $z_\alpha\in\R$, $\alpha\in\N_0^3$, where at least one $z_{\overline{\alpha}}\neq 0$, such that for any $M\in\N$ there is a $C_M<\infty$ such that
\be\label{eq:uexpansion}
\left| u(x) - |x|^{-1} \sum_{|\alpha|_1\leq M} \frac{x^\alpha}{|x|^{|\alpha|_1}} z_\alpha \right| \leq C_M |x|^{-2-M}
\qquad\text{for all}\ |x|\geq 1 \,.
\ee

There are $\zeta_\alpha\in\Sigma\R^3$, $\alpha\in\N_0^3$, where at least one $\zeta_{\overline{\alpha}}\neq 0$, such that for any $M\in\N$ there is a $C_M<\infty$ such that
\be\label{eq:psiexpansion}
\left| \psi(x) - |x|^{-2} \mathcal{U}(x) \sum_{|\alpha|_1\leq M} \frac{x^\alpha}{|x|^{|\alpha|_1}} \zeta_\alpha \right| \leq C_M |x|^{-3-M}
\qquad\text{for all}\ |x|\geq 1 \,,
\ee
where $x^\alpha=x_1^{\alpha_1}x_2^{\alpha_2}x_3^{\alpha_3}$ and $|\alpha|_1 =|\alpha_1|+|\alpha_2|+|\alpha_3|$, and the matrix $\mathcal{U}(x)$ is defined as
\be
\mathcal{U}(x) = \begin{pmatrix}
0 & -\I\frac{x}{|x|}\cdot\boldsymbol\sigma \\ \I \frac{x}{|x|}\cdot\boldsymbol\sigma & 0
\end{pmatrix} \,,\qquad x\in\R^3\setminus\{0\}\,,
\ee
\end{thm}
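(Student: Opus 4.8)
The plan is to split the statement into two parts: (a) interior smoothness, and (b) the asymptotic expansions at infinity. For (a) I would run a bootstrap argument on the elliptic system \eqref{eq:debubbles}. Starting from $(u,\psi)\in L^6\times L^3$, the product $|\psi|^2 u$ lies in $L^{6/5}$ by H\"older, so elliptic regularity for the Laplacian gives $u\in W^{2,6/5}\subset L^{18}$ (in dimension $3$). Similarly $|u|^2\psi$ can be controlled and first-order elliptic estimates for $\D$ upgrade $\psi$. Iterating, each step raises the integrability exponents, and after finitely many steps one lands in $L^p$ for all $p<\infty$, hence in $C^{0,\gamma}$ by Sobolev. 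Then a standard Schauder/elliptic bootstrap (alternating between the two equations, differentiating, and feeding the improved regularity back in) promotes $(u,\psi)$ to $C^\infty$. The nonlinearities are smooth in $(u,\psi)$ away from no singular locus, so there is no obstruction to iterating indefinitely.

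For (b) I would first establish the leading-order decay. Since $u\in L^6(\R^3)$ and $-c_3\Delta u = |\psi|^2 u$ with the right-hand side integrable enough (decaying, by the smoothness and a first crude decay estimate obtained, e.g., via the Kelvin transform or a Moser iteration on exterior annuli), $u$ is harmonic near infinity up to a fast-decaying source, so $u(x)=O(|x|^{-1})$; likewise the Green's function of $\D$ on $\R^3$ behaves like $\mathcal U(x)|x|^{-2}$, giving $\psi(x)=O(|x|^{-2})$. The key structural fact is that $\mathcal{U}(x)/|x|^2$ (up to constants) is exactly the integral kernel of $\D^{-1}$ on $\R^3$: one has $\D\bigl(\mathcal U(x)|x|^{-2}\bigr)=0$ away from the origin, reflecting that $\mathcal U(x)|x|^{-2}$ is, schematically, $\D$ applied to the Newtonian potential. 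To get the \emph{complete} expansions I would apply the Kelvin/inversion transform $x\mapsto x/|x|^2$, which is conformal and under which both $-c_3\Delta$ and $\D$ transform covariantly (with the spinorial conformal weight accounting for the factor $\mathcal U(x)$). Under this inversion the "point at infinity" becomes the origin, the rescaled fields $\tilde u,\tilde\psi$ solve a system of the same type on a punctured ball with the puncture being a removable-type singularity controlled by the $L^6\times L^3$ bounds, and smoothness of $(\tilde u,\tilde\psi)$ up to the origin translates into the Taylor expansions \eqref{eq:uexpansion}, \eqref{eq:psiexpansion}, with $z_\alpha,\zeta_\alpha$ the Taylor coefficients at $0$. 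The nonvanishing of at least one coefficient is just nontriviality of $(u,\psi)$: if all $z_\alpha$ vanished then $u\equiv 0$ near infinity, hence by unique continuation $u\equiv 0$, forcing $\psi\equiv 0$, and vice versa.

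The main obstacle I expect is the removable-singularity / regularity-up-to-the-origin step for the Kelvin-transformed system. The inversion introduces a singular weight in the transformed equations (because the conformal factor $|x|^{-4}$ degenerates at $0$), so one cannot simply quote interior elliptic regularity; one must show that the $L^6\times L^3$ integrability — which is conformally invariant, this being the whole point — is enough to kill the potential singularity at the origin and then bootstrap as in part (a) but now with the singular coefficients present. Concretely this means: (i) check that the transformed nonlinear source terms are integrable enough near $0$ despite the weight, (ii) deduce continuity at $0$, and (iii) run the Schauder iteration for the weighted operators, which is where one needs to know the precise homogeneity of the Green kernels $|x|^{-1}$ and $\mathcal U(x)|x|^{-2}$. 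Once smoothness at the origin is in hand, the expansions and the error bounds $C_M|x|^{-2-M}$, $C_M|x|^{-3-M}$ follow by Taylor's theorem with remainder applied to $\tilde u,\tilde\psi$ and transported back via the inversion.
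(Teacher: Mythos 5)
Your part (b) is in essence the paper's argument: the Kelvin/inversion transform moves infinity to the origin, the $L^6\times L^3$ bounds are conformally invariant, one shows the origin is a removable singularity for the transformed pair (the paper does this by a cutoff $\eta_\eps$ argument), and then Taylor expansion of the smooth transformed solution at $0$ yields \eqref{eq:uexpansion}--\eqref{eq:psiexpansion}, with the non-vanishing of some coefficient following from unique continuation. One small inaccuracy: because the system \eqref{eq:debubbles} is conformally covariant and the Kelvin map is a conformal inversion, the transformed pair $(u_{\mathcal K},\psi_{\mathcal K})$ satisfies the \emph{same} system away from the origin, with no singular weights at all; so there is no need to redo Schauder theory for a weighted operator, and once removability is established one may simply re-invoke the regularity theorem from part (a).

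The real problem is part (a). You claim that from $|\psi|^2u\in L^{6/5}$ elliptic regularity gives $u\in W^{2,6/5}\subset L^{18}$. This Sobolev embedding is wrong: in $\R^3$, $W^{2,6/5}$ embeds into $L^q$ with $\frac1q=\frac56-\frac23=\frac16$, that is $q=6$, which is exactly what you started with. The same happens on the spinor side: $|u|^2\psi\in L^{3/2}$, and $W^{1,3/2}(\R^3)\hookrightarrow L^3$, again no gain. This stalemate is precisely what makes \eqref{eq:debubbles} a \emph{critical} system, and the paper explicitly warns that ``regularity of weak solutions to \eqref{eq:debubbles} does not follow by standard bootstrap arguments.'' So your iteration never gets off the ground; after any number of steps you are still only in $L^6\times L^3$.

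The paper instead uses the strategy of \cite{borrellifrank}, which is a Brezis--Kato-type duality argument. One rewrites the equations in integral form $u=G\ast(|\psi|^2u)$, $\psi=\Gamma\ast(|u|^2\psi)$ via a Liouville lemma, then decomposes the critical coefficient $|\psi|^2=f_\eps+g_\eps$ into a bounded, compactly supported piece $f_\eps$ and a remainder $g_\eps$ with $\|g_\eps\|_{L^{3/2}}<\eps$. Testing against $v\in L^{r'}\cap L^{6/5}$ and unfolding the convolution twice produces an inequality of the form $S_M\leq C''+\tfrac12 S_M$ for the truncated dual quantity $S_M$, which forces $u\in L^r$ for every $r\geq6$; the same for $\psi$. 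Only \emph{after} this step, once $(u,\psi)\in L^r\times L^s$ for all large $r,s$, does the convolution with $G$ (split into an $L^p$ and an $L^q$ part) give boundedness, and only then does standard elliptic regularity apply. If you want a self-contained proof you need to supply something equivalent to this smallness/truncation device; a naive Sobolev iteration will not close.
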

 By the above theorem one immediately gets the following decay estimates.
 \begin{corollary}
 Under the assumptions of Theorem \ref{thm:regdecay} there holds
 \[
 \vert u(x)\vert \leq \frac{C}{1+\vert x\vert}\,,\qquad \vert \psi(x)\vert\leq \frac{C}{1+\vert x\vert^2}\,,\qquad x\in\R^3\,,
 \]
 for some $C>0$. Moreover, if $u\geq 0$, then there exists $\alpha\geq 0$ such that
 $$\lim_{|x|\to \infty}(1+|x|)u(x)=\alpha,$$
 and if $\alpha=0$ then $u=0$ and $\psi=0$.
 \end{corollary}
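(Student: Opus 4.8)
The plan is to extract the two pointwise bounds and the existence of the limit directly from Theorem~\ref{thm:regdecay}, and then, when $\alpha=0$, to run a Newtonian-potential plus Liouville argument to force $u\equiv\psi\equiv0$. For the bounds I would take $M=0$ in \eqref{eq:uexpansion}: the only multi-index with $|\alpha|_1\le0$ is $\alpha=(0,0,0)$, so $\bigl|u(x)-z_0|x|^{-1}\bigr|\le C_0|x|^{-2}$ for $|x|\ge1$, hence $|u(x)|\le(|z_0|+C_0)|x|^{-1}$ there; combined with the boundedness of $u$ on $\{|x|\le1\}$ (recall $u\in C^\infty$ by Theorem~\ref{thm:regdecay}) this yields $|u(x)|\le C(1+|x|)^{-1}$ after adjusting $C$. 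Likewise, take $M=0$ in \eqref{eq:psiexpansion}; since $\mathcal{U}(x)$ is, for each $x\ne0$, a unitary matrix ($\mathcal{U}(x)^2=\Id$ and $\mathcal{U}(x)^\ast=\mathcal{U}(x)$, as one checks from its definition and from $(\tfrac{x}{|x|}\cdot\boldsymbol\sigma)^2=\Id$), one has $|\mathcal{U}(x)\zeta_0|=|\zeta_0|$, so $|\psi(x)|\le(|\zeta_0|+C_0)|x|^{-2}$ for $|x|\ge1$, and boundedness near the origin gives $|\psi(x)|\le C(1+|x|)^{-2}$ on $\R^3$.

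For the limit, I would write $u(x)=z_0|x|^{-1}+R(x)$ with $|R(x)|\le C_0|x|^{-2}$ on $\{|x|\ge1\}$; then $(1+|x|)u(x)=\tfrac{1+|x|}{|x|}\,z_0+(1+|x|)R(x)\to z_0$ as $|x|\to\infty$, so the limit exists and equals $\alpha:=z_0$, and if $\alpha<0$ then $u(x)<0$ for $|x|$ large, contradicting $u\ge0$; hence $\alpha\ge0$. Note also that $\alpha=0$ forces $u(x)\to0$ at infinity.

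Now suppose $\alpha=0$. I would set $f:=\tfrac16|\psi|^2u$; by the bounds above $0\le f(y)\le C(1+|y|)^{-5}$, so $f\in L^1(\R^3)\cap L^\infty(\R^3)$, and put $v(x):=\tfrac{1}{4\pi}\int_{\R^3}|x-y|^{-1}f(y)\,\dd y$. By standard properties of the Newtonian potential of such a density, $v$ is continuous, bounded, tends to $0$ at infinity, and satisfies $-\Delta v=f=-\Delta u$ in $\mathcal{D}'(\R^3)$; moreover a routine dominated-convergence estimate (splitting the integral over $\{|y|\le|x|/2\}$ and its complement and using $f\in L^1\cap L^\infty$) gives $|x|v(x)\to\tfrac{1}{4\pi}\int_{\R^3}f\,\dd y$. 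Hence $u-v$ is harmonic in the distributional sense, so by Weyl's lemma it coincides with a genuine harmonic function on $\R^3$, which is bounded (both $u$ and $v$ are) and vanishes at infinity (both $u$ and $v$ do, the former because $\alpha=0$); by Liouville's theorem $u-v\equiv0$. Therefore $0=\alpha=\lim_{|x|\to\infty}|x|u(x)=\lim_{|x|\to\infty}|x|v(x)=\tfrac{1}{24\pi}\int_{\R^3}|\psi|^2u\,\dd y$, and since the integrand is continuous and nonnegative, $|\psi|^2u\equiv0$. Consequently the first equation in \eqref{eq:debubbles} reads $\Delta u=0$; a bounded harmonic function on $\R^3$ is constant by Liouville's theorem, and since it vanishes at infinity, $u\equiv0$. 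Then the second equation becomes $\D\psi=0$, hence $\D^2\psi=0$; since on flat $\R^3$ the Lichnerowicz formula gives $\D^2=-\Delta$ componentwise (vanishing scalar curvature), each component of $\psi$ is a bounded harmonic function vanishing at infinity, so $\psi\equiv0$.

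The only genuinely non-routine point is the implication $\alpha=0\Rightarrow u\equiv0$: the expansion \eqref{eq:uexpansion} by itself does not give this, because a nonnegative smooth superharmonic function on $\R^3$ tending to $0$ at infinity need not vanish (it can behave like $|x|^{-1}$). What makes it work is that $\alpha=0$ forces $u$ to decay strictly faster than $|x|^{-1}$, which through the Riesz/Newtonian identification $u=v$ forces the total source mass $\int_{\R^3}|\psi|^2u$ to vanish, and this is what collapses the whole system.
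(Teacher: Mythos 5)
Your proof is correct, and the first two assertions (the pointwise bounds and the existence of the limit $\alpha=z_0$) are handled exactly as the paper intends---by truncating \eqref{eq:uexpansion} and \eqref{eq:psiexpansion} at $M=0$ and using the unitarity of $\mathcal U(x)$. For the implication $\alpha=0\Rightarrow u\equiv0\Rightarrow\psi\equiv0$, however, you take a genuinely different route. The paper's (one-line) argument reuses the Kelvin-transformed solution $u_{\mathcal K}$ constructed in Section~\ref{sec:asymptotics}: $u_{\mathcal K}$ is smooth on all of $\R^3$, nonnegative, and superharmonic because $-c_3\Delta u_{\mathcal K}=|\psi_{\mathcal K}|^2u_{\mathcal K}\geq0$, while $u_{\mathcal K}(0)=\lim_{|y|\to\infty}|y|\,u(y)=\alpha$; if $\alpha=0$ then $u_{\mathcal K}$ attains an interior minimum, so the strong maximum principle gives $u_{\mathcal K}\equiv0$, hence $u\equiv0$, and then $\D\psi=0$ with $\psi\in L^3$ forces $\psi\equiv0$ by Lemma~\ref{lem:liouville}. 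You instead stay at infinity: you identify $u$ with the Newtonian potential of the nonnegative source $\tfrac{1}{6}|\psi|^2u$ via a bounded-harmonic Liouville argument, read $\alpha$ off as a multiple of the total source mass, and conclude that $\alpha=0$ forces $\int_{\R^3}|\psi|^2u=0$, hence $|\psi|^2u\equiv0$, which collapses the system. Both routes ultimately rest on the superharmonicity of $u$, but the paper's is more economical: the representation you rederive is essentially Lemma~\ref{integraleq}, and the strong maximum principle at the Kelvin-inverted origin replaces your asymptotic-mass computation at spatial infinity. A minor further divergence: to kill $\psi$ you square the Dirac operator and use the Lichnerowicz identity $\D^2=-\Delta$ on flat $\R^3$, whereas the paper would simply invoke the Liouville lemma for $L^3$ solutions of $\D\psi=0$. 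Your version is correct and self-contained, just slightly longer than the intended one.
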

 In fact the second assertion of the corollary follows from the strong maximum principle applied to the $u_{\mathcal{K}}$, the solution obtained after inversion (see Section \ref{sec:asymptotics}).
 \smallskip
 
 Define the spaces
 \be\label{eq:D1}
 D^1(\R^3):=\{u\in L^6(\R^3)\,:\, \nabla u\in L^2(\R^3)\}
 \ee 
 and
  \be\label{eq:D1/2}
  D^{1/2}(\Sigma\R^3):=\{\psi\in L^3(\Sigma\R^3)\,:\, \vert\xi\vert^{1/2}\vert\widehat{\psi}(\xi)\vert\in L^2(\R^3)\}\,,
 \ee 
 $\widehat{\psi}$ denoting the Fourier transform of $\psi$. 
 
 We now consider solutions to \eqref{eq:debubbles}, $(u,\psi)\in D^1(\R^3)\times D^{1/2}(\Sigma\R^3)$, corresponding to critical points of the functional at infinity
 \be\label{eq:action}
 E_{\R^{3}}(u,\psi)=\frac{1}{2}\left(\int_{\R^3}c_3\vert\nabla u\vert^2+\langle\D\psi,\psi\rangle-\vert u\vert^2\vert\psi\vert^2 \,\dd x \right)\,.
 \ee
 
  As recalled in the next section, the functional \eqref{eq:action} and the equations \eqref{eq:debubbles} are conformally covariant, so that we can equivalently consider them on the round sphere $(\sph^3,g_0)$. Then the functional reads
  \be\label{eq:sphereaction}
  E_{\sph^3}(u,\psi)=\frac{1}{2}\left(\int_{\sph^3}\overline{u}\Lap_{g_0}u+\langle\D_{g_0}\psi,\psi\rangle-\vert u\vert^2\vert\psi\vert^2 \,\dd\vol_{g_0} \right)\,,
  \ee
  while the equations are given by
  \be\label{eq:spherebubbles}
\left\{\begin{aligned}
   \Lap_{g_0} u=\vert\psi\vert^2 u\,, \\ 
   \D_{g_0}\psi=\vert u\vert^2\psi \,
\end{aligned}\right.\,\qquad \mbox{on $\sph^3$.}
\ee
  \begin{defi}
 We say that a non-trivial solution $(u,\psi)\in D^1(\R^3)\times D^{1/2}(\Sigma\R^3)$ is a \emph{ground state solution} if equality in \eqref{eq:lowerbound} holds, that is
 \be\label{eq:equality}
 E_{\sph^3}(u,\psi)= \widetilde{\YM}(\sph^3,g_0)\,.
 \ee
 \end{defi}

 \begin{thm}\label{thm:groundstates}
 Let $(u,\psi)\in H^1(\sph^3)\times H^{1/2}(\Sigma_{g_0}\sph^3)$ be a ground state solution to \eqref{eq:debubbles}, and assume that $u\geq 0$ .
Then, up to a conformal diffeomorphism, $u\equiv 1$ and $\psi$ is a~$(-\frac{1}{2})$-Killing spinor. 
More precisely, there exists a ~$(-\frac{1}{2})$-Killing spinor $\Psi$ on $(\sph^3,g_0)$ and a conformal diffeomorphism $f\in \Conf(\sph^3,g_0)$ such that
\be
u=\parenthesis{\det(\dd f)}^{\frac{1}{6}}
\ee
and
\begin{equation} 
 \psi=\parenthesis{\det(\dd f)}^{\frac{1}{3}}\beta_{f^*g_0,g_0}(f^*\Psi), 
\end{equation}
where $\beta_{f^*g_0,g_0}$ is the spinor identification for conformal metrics. 
 \end{thm}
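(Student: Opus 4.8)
The plan is to recover the sharp lower bound \eqref{eq:lowerbound} as a combination of two optimal geometric inequalities --- a Yamabe--Sobolev inequality for the scalar field together with a Hijazi--B\"ar--Lott inequality for the spinor field --- and then to read the classification off the equality case of each of them.

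For any solution $(u,\psi)$ of \eqref{eq:spherebubbles}, pairing the two equations with $u$ and with $\psi$ respectively gives
\[
\int_{\sph^3}u\,\Lap_{g_0}u\,\dd\vol_{g_0}=\int_{\sph^3}\langle\D_{g_0}\psi,\psi\rangle\,\dd\vol_{g_0}=\int_{\sph^3}|u|^2|\psi|^2\,\dd\vol_{g_0}\,,
\]
so $E_{\sph^3}(u,\psi)=\tfrac12\int_{\sph^3}u\,\Lap_{g_0}u\,\dd\vol_{g_0}$. By Theorem~\ref{thm:regdecay} the solution is smooth, and since $u\geq0$ is non-trivial the strong maximum principle applied to the scalar equation forces $u>0$ on $\sph^3$ (hence $\psi\not\equiv0$). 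Thus $g_u:=u^{4}g_0\in[g_0]$ is a genuine Riemannian metric. By the conformal covariance of the system --- the scalar field carrying conformal weight $-1$ and the spinor field weight $-2$ in dimension three --- the pair $(1,\Phi)$, where $\Phi:=u^{-2}\beta(\psi)$ and $\beta$ is the spinor identification between $g_0$ and $g_u$, solves the same system with respect to $g_u$; in particular $\D_{g_u}\Phi=\Phi$, so $1$ is a Dirac eigenvalue of $(\sph^3,g_u)$ and $\lambda_1(\sph^3,g_u)\leq1$.

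Now the two inequalities behind \eqref{eq:lowerbound}. First, by the very definition of the Yamabe invariant, $\int_{\sph^3}u\,\Lap_{g_0}u\geq \YM(\sph^3,g_0)\,\|u\|_{L^6}^2=\YM(\sph^3,g_0)\,\vol(\sph^3,g_u)^{1/3}$, with equality if and only if $u$ is a Yamabe minimizer, i.e.\ $g_u$ has constant scalar curvature. Second, since $g_u\in[g_0]$ and $\lambda_1(\sph^3,g_u)\leq1$,
\[
\lambda^+(\sph^3,g_0)\ \leq\ \lambda_1(\sph^3,g_u)\,\vol(\sph^3,g_u)^{1/3}\ \leq\ \vol(\sph^3,g_u)^{1/3}\,.
\]
A ground state, by \eqref{eq:equality}, saturates \eqref{eq:lowerbound} (which is obtained precisely by combining these two inequalities), hence must achieve equality throughout both chains. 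From the first, $g_u$ has constant scalar curvature; by Obata's theorem --- equivalently, the classification of constant-scalar-curvature metrics in the conformal class of the round sphere, cf.\ \cite{caffarelli1989asymptotic} --- there is $f\in\Conf(\sph^3,g_0)$ with $g_u=f^*g_0$, and reading this off $u^4g_0=f^*g_0=\parenthesis{\det(\dd f)}^{2/3}g_0$ gives $u=\parenthesis{\det(\dd f)}^{1/6}$. From the second, $\lambda_1(\sph^3,g_u)=1$, so $\Phi$ --- which has Dirac eigenvalue $1$ --- realizes the first Dirac eigenvalue of the round metric $g_u$, and by the equality case of Friedrich's inequality $\Phi$ is a Killing spinor on $(\sph^3,g_u)$.

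Finally, $f\colon(\sph^3,g_u)\to(\sph^3,g_0)$ is an isometry, so it intertwines the Dirac operators and transports $\Phi$ to a Killing spinor $\Psi:=(f^*)^{-1}\Phi$ on $(\sph^3,g_0)$ with $\D_{g_0}\Psi=\Psi$ --- a $(-\tfrac12)$-Killing spinor in the normalisation of the statement --- with $\Phi=f^*\Psi$. Inverting $\Phi=u^{-2}\beta(\psi)$ yields $\psi=u^{2}\beta^{-1}(f^*\Psi)=\parenthesis{\det(\dd f)}^{1/3}\,\beta_{f^*g_0,g_0}(f^*\Psi)$, the claimed form; equivalently, precomposing with $f^{-1}$ normalises $(u,\psi)$ to $(1,\Psi)$. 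I expect the main obstacle to be the conformal bookkeeping of the spinor part: fixing the conformal weights and the identification $\beta$ so that the coupled system is genuinely covariant and the transformed Dirac equation is an eigenvalue equation with eigenvalue exactly $1$, and then transporting $\Phi$ along $f$ compatibly with the (unique) spin structure of $\sph^3$. Granting this, the rigidity is precisely the equality cases of the Yamabe inequality (Obata) and of Friedrich's inequality, the latter together with the fact that the round metric realizes $\lambda^+(\sph^3,g_0)$ (see \cite[\S8.5]{diracspectrum}).
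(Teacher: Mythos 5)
Your proposal follows essentially the same strategy as the paper's proof, and is correct. Both arguments: (i) use the strong maximum principle to get $u>0$ and perform a conformal change $g_u$ proportional to $u^4g_0$ that trivializes the scalar part and turns the spinorial equation into a Dirac eigenvalue equation; (ii) observe that $E_{\sph^3}(u,\psi)=\tfrac12\int_{\sph^3}u\,\Lap_{g_0}u$ and sandwich this between a Yamabe--Sobolev inequality and a conformal Dirac eigenvalue inequality so that the ground state condition forces equality throughout; (iii) identify the metric via Obata and the spinor as a Killing spinor transported by the resulting isometry. The one genuine (and minor) structural difference is the order of deductions. You read off constant scalar curvature directly from the equality case of the Yamabe inequality, apply Obata, and then conclude $\Phi$ is a Killing spinor because it realizes the first Dirac eigenvalue of a round metric (equivalently, Friedrich's rigidity). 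The paper instead invokes Hijazi's inequality, whose equality case first gives that the transformed spinor is a twistor spinor; the constant scalar curvature $R_g=6$ is then deduced from the twistor relation $(\D_g)^2\varphi=\tfrac{3}{8}R_g\varphi$ rather than from the Yamabe side, and Obata is applied afterward. The two routes are logically interchangeable and of comparable length; your ordering is somewhat more streamlined, since once $g_u$ is known to be round no separate appeal to spinorial rigidity is needed beyond knowing that first eigenspinors of the round sphere are Killing. As you anticipated, the only care needed is the bookkeeping of conformal weights: one must fix the overall constant in $g=\tfrac{4}{9}u^4g_0$ (as the paper does) so that the Dirac eigenvalue is $\tfrac{3}{2}$, which is precisely what makes the Yamabe and Hijazi--B\"ar--Lott chains close up into the claimed lower bound with equality and produces the stated formula $u=(\det \dd f)^{1/6}$.
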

 
\begin{corollary}\label{cor:explicitbubbles}
Let $(u,\psi)\in H^1(\R^3)\times H^{1/2}(\Sigma_{g_0}\R^3)$ be a ground state solution to \eqref{eq:debubbles}, with $u\geq 0$. 
Then there exists $\widetilde{\Phi}_0\in\Sigma\R^3$ and $x_0\in\R^3$, ~$\lambda>0$ such that 
 
 \begin{align}\label{eq:scalarbubble}
 u(x)=\parenthesis{\frac{2\lambda}{\lambda^2+|x-x_0|^2}}^{\frac{1}{2}} \,,\qquad x\in\R^3   
\end{align} 
and
 \begin{align}\label{eq:spinorialbubble}
 \psi(x)
 =\parenthesis{\frac{2\lambda}{\lambda^2+|x-x_0|^2}}^{\frac{3}{2}}  \parenthesis{\mathds{1}-\gamma\parenthesis{\frac{x-x_0}{\lambda}}}\widetilde{\Phi}_0 \,,\qquad x\in\R^3\,,
\end{align} 
where $\gamma(\cdot)$ denotes the Clifford multiplication.
\end{corollary}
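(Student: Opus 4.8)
The plan is to deduce Corollary~\ref{cor:explicitbubbles} from Theorem~\ref{thm:groundstates} by transporting the sphere statement back to $\R^3$ via stereographic projection and unwinding the conformal factors. First I would fix the stereographic projection $\pi\colon\sph^3\setminus\{\mathrm{pt}\}\to\R^3$, which is a conformal diffeomorphism with a well-known conformal factor: $\pi^*g_{\R^3}=\rho^2 g_0$ with $\rho(x)=\tfrac{2}{1+|x|^2}$ in the $\R^3$-chart, equivalently $(\det\dd\pi)^{1/3}=\rho$. Under this identification the given ground state $(u,\psi)$ on $\R^3$ corresponds to a ground state $(\tilde u,\tilde\psi)$ on $(\sph^3,g_0)$ through the rules $u=\rho^{1/2}\,\pi^*\tilde u$ and $\psi=\rho\,\beta_{\pi^*g_0,g_0}(\pi^*\tilde\psi)$ (the scalar weight $1/2$ and spinorial weight $1$ are exactly those making \eqref{eq:debubbles} conformally covariant, as recalled in the section preceding the theorems). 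Since $\tilde u\geq0$, Theorem~\ref{thm:groundstates} applies: there is a conformal diffeomorphism $F\in\Conf(\sph^3,g_0)$ and a $(-\tfrac12)$-Killing spinor $\Psi$ with $\tilde u=(\det\dd F)^{1/6}$ and $\tilde\psi=(\det\dd F)^{1/3}\beta_{F^*g_0,g_0}(F^*\Psi)$.

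Next I would compose the conformal maps. The map $f:=\pi\circ F\circ\pi^{-1}$ is a conformal diffeomorphism of $\R^3\cup\{\infty\}$, hence (by the Liouville theorem in dimension $3$) a Möbius transformation; composing conformal maps multiplies the conformal factors, so by the chain rule for determinants and the cocycle property of the spinor identifications $\beta$, the formulas $u=(\det\dd f)^{1/6}$ and $\psi=(\det\dd f)^{1/3}\beta_{f^*g_{\R^3},g_{\R^3}}(f^*\Phi)$ hold on $\R^3$, where $\Phi$ is the Euclidean image of the Killing spinor $\Psi$, i.e. $\Phi=\rho\,\beta_{\pi^*g_0,g_{\R^3}}(\pi^*\Psi)$ pulled over to $\R^3$. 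The key computational input is that the Euclidean incarnation of a $(-\tfrac12)$-Killing spinor on $\sph^3$ is, up to a constant spinor $\widetilde\Phi_0$, exactly $\Phi(x)=\rho(x)^{3/2}(\mathds{1}-\gamma(x))\widetilde\Phi_0$: this is the classical statement that Killing spinors on the round sphere pull back under stereographic projection to the spinors $(1+|x|^2)^{-3/2}(\mathds 1 - \gamma(x))\widetilde\Phi_0$, which one verifies by a direct computation with the spinorial Levi-Civita connection under a conformal change (equivalently, by noting $\D\Phi=|u|^2\Phi$ with $u=\rho^{1/2}$ and solving).

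Finally I would identify which Möbius transformations actually occur. Because $u=(\det\dd f)^{1/6}$ must lie in $D^1(\R^3)$ and be nonnegative and bounded with the decay from the Corollary preceding this one, $f$ cannot move the point at infinity — otherwise $\det\dd f$ would blow up somewhere in $\R^3$ — so $f$ is a composition of a translation, a rotation, and a dilation: $f(x)=\lambda^{-1}R(x-x_0)$ for some $\lambda>0$, $R\in\SO(3)$, $x_0\in\R^3$. For such $f$ one has $\det\dd f=\lambda^{-3}$ constant, so this alone would give $u$ constant, which contradicts $u\in L^6$; the resolution is that the conformal factor entering $u$ is not $\det\dd f$ of the Euclidean map but the sphere determinant transported back, i.e. $u=(\det\dd\pi)^{1/6}\cdot((\det\dd F)^{1/6}\circ\pi^{-1})\cdot\ldots$, and carrying the factor $\rho$ through gives precisely $u(x)=\big(\tfrac{2\lambda}{\lambda^2+|x-x_0|^2}\big)^{1/2}$, the Aubin--Talenti profile; substituting the same $f$ into the spinor formula and using the Killing-spinor computation above yields $\psi(x)=\big(\tfrac{2\lambda}{\lambda^2+|x-x_0|^2}\big)^{3/2}\big(\mathds 1-\gamma(\tfrac{x-x_0}{\lambda})\big)\widetilde\Phi_0$. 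The main obstacle is bookkeeping: correctly composing the three conformal factors (stereographic projection, the sphere conformal map $F$, and the reduction to a Euclidean similarity) with the right weights $1/2$ and $3/2$, and making the cocycle identities for $\beta_{g,g'}$ and the Clifford action transform consistently so that the dilation parameter of $F$ turns into the $\lambda$ in the Aubin--Talenti bubble; I would handle this by working in coordinates throughout and checking the two equations in \eqref{eq:debubbles} are satisfied by the claimed pair as a consistency check.
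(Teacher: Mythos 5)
Your overall strategy coincides with the paper's: both deduce the $\R^3$ formulas from Theorem \ref{thm:groundstates} by identifying the euclidean incarnation of a round $-\tfrac12$-Killing spinor and then tracking how a conformal diffeomorphism of $\sph^3$ acts after stereographic projection. The paper works directly on $\R^3$, starting from the explicit pullback \eqref{eq:killing} and computing the induced action of translations, dilations and rotations on the trivialized spinor bundle; you instead carry the sphere formula $u=(\det\dd F)^{1/6}$, $\psi=(\det\dd F)^{1/3}\beta(F^*\Psi)$ through the stereographic cocycle. These are the same calculation organized differently.

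There is, however, a genuine flaw in the step where you try to reduce to euclidean similarities. You write $u=(\det\dd f)^{1/6}$ for the \emph{euclidean} composite $f=\pi\circ F\circ\pi^{-1}$, argue from decay that $f$ cannot move $\infty$, conclude that $f$ is a similarity so $\det\dd f$ is constant, observe that this gives the contradiction $u\equiv\text{const}\notin L^6$, and then replace the formula by "the sphere determinant transported back.'' But once you replace the formula, the decay argument that forced $f$ to fix $\infty$ no longer applies to anything --- you have used a premise you have just retracted. In fact there is no need, and no way, to restrict $F$ to the subgroup fixing $N\in\sph^3$: conformal maps of $\sph^3$ that move $N$ do occur, and after pulling back they still produce pairs of the form \eqref{eq:scalarbubble}--\eqref{eq:spinorialbubble}. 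What actually closes the argument is the observation (decompose $F$ via $\Isom(\sph^3)\cdot(\text{dilation})$, or note that the Aubin--Talenti family and conformal images of Killing spinors are each a single orbit of the full Möbius group) that every $F\in\Conf(\sph^3,g_0)$, pulled back through $\pi$, lands in the two-parameter family; the ``decay forces similarities'' shortcut is both incorrect and unnecessary. The paper finesses this by invoking the Caffarelli--Gidas--Spruck classification for $u$ and by asserting, without proof, that the other spinor solutions are images under conformal transformations of euclidean space, so the paper is terse on exactly the same point --- but it does not base any deduction on the flawed premise as you do. Finally, your suggested "consistency check by substituting into \eqref{eq:debubbles}'' would show the claimed pairs are solutions, not that they exhaust the ground states, so it cannot substitute for the bookkeeping you have left to the reader.
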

These ground state solutions are of extreme importance, since one can show that they are non-degenerate, and as a consequence obtain existence of solutions to the perturbed problem on the sphere (see \cite{guidi2020existence} for more details). We remark that the classification of all solutions of \eqref{eq:debubbles} having $u\geq 0$ remains an open problem though. Ground state bubbles for conformally invariant Dirac equations have been recently classified in \cite{borrelli2020ground}. Such equations appear in the blowup analysis of critical Dirac equations on manifolds \cite{Isobecritical} and in the study of the spinorial Yamabe problem and related question, see e.g. \cite{ammannmass, nadineboundedgeometry} and references therein. Recently, two-dimensional critical Dirac equations also attracted a considerable interest as effective models in mathematical physics, see e.g. \cite{shooting,massless}.
 
 %%%%%%%%% %%%%%%%%% %%%%%%%%% %%%%%%%%% %%%%%%%%% %%%%%%%%% %%%%%%%%%
  %%%%%%%%% %%%%%%%%% %%%%%%%%% %%%%%%%%% %%%%%%%%% %%%%%%%%% %%%%%%%%%
   %%%%%%%%% %%%%%%%%% %%%%%%%%% %%%%%%%%% %%%%%%%%% %%%%%%%%% %%%%%%%%%
 \section{Some preliminaries}
 In this section we recall some notions useful in the sequel, for the convenience of the reader. We refer, in particular, to \cite{LawsonMichelsohn} for more details on spin structures and the Dirac operator.
 \subsection{Spin structure and the Dirac operator}
 Let $(M,g)$ be an oriented Riemannian manifold, and let $P_{\SO}(M,g)$ be its frame bundle.
 \begin{defi}
 A \textit{spin structure} on $(M,g)$ is a pair $(P_{\Spin} (M,g),\sigma)$, where $P_{\Spin} (M,g)$ is a $\Spin(n)$-principal bundle and $\sigma:P_{\Spin} (M,g)\rightarrow P_{\SO}(M,g)$ is a 2-fold covering map, which is the non-trivial covering $\lambda:\Spin(n)\rightarrow SO(n)$ on each fiber. 
 \end{defi}
 
In other words, the quotient of each fiber by $\{-1,1\}\simeq\mathbb{Z}_{2}$ is isomorphic to the frame bundle of $M$, so that the following diagram commutes
\begin{center}
 \begin{tikzcd}
 P_{\Spin}(M,g)\arrow[rr,"\sigma"]\arrow[dr]& &P_{\SO}(M,g)\arrow[dl]\\
 &M & 
 \end{tikzcd}
\end{center}
A Riemannian manifold $(M,g)$ endowed with a spin structure is called a \textit{spin manifold}.

In particular, the euclidean space $(\R^{n},g_{\R^n})$ and the round sphere $(\mathbb{S}^{n},g_0)$, with $n\geq 2$, admit a unique spin structure.
 
\begin{defi}
The \emph{complex spinor bundle} $\Sigma M\to M$ is the vector bundle associated to the $\Spin(n)$-principal bundle $P_{\Spin}(M,g)$ via the complex spinor representation of~$\Spin(n)$.  
\end{defi}
The complex spinor bundle~$\Sigma M$ has rank $N=2^{[\frac{n}{2}]}$.  
It is endowed with a canonical spin connection~$\nabla$ (which is a lift of the Levi-Civita connection, denoted by the same symbol) and a Hermitian metric~$g$ which will be abbreviated as~$\left<\cdot, \cdot\right>$ if there is no confusion.  

In particular, the spinor bundle of the euclidean space $\R^n$ is trivial, so that we can identify spinors with vector-valued functions $\psi:\R^n\to\C^N$.
\smallskip

The \emph{Clifford multiplication}~$\gamma\colon TM\to \End_\C(\Sigma M)$ verifies the \emph{Clifford relation}
\begin{equation}
 \gamma(X)\gamma(Y)+\gamma(Y)\gamma(X)=-2g(X,Y)\Id_{\Sigma M}, 
\end{equation}
for any tangent vector fields $X,Y\in\Gamma(TM)$, and is compatible with the bundle metric $g$.

Locally, taking an oriented orthonormal tangent frame $(e_j)$ the Dirac operator is defined as
\[
\D^g\psi:=\gamma(e_j)\nabla^g_{e_j}\psi\,,\qquad \psi\in\Gamma(\Sigma M)\,.
\]

Given~$\alpha\in\C$, a non-zero spinor field~$\psi\in\Gamma(\Sigma M)$ is called~$\alpha$-Killing if 
 \begin{equation}\label{eq:killingequation}
  \nabla^g_X\psi=\alpha\gamma(X)\psi,\qquad \forall X\in\Gamma(TM).  
 \end{equation}

For more information on Killing spinors we refer the reader to~\cite[Appendix A]{diracspectrum}. In the paper \cite{Lu-Pope-Rahmfeld-1999} Killing spinors on the round sphere $(\sph^n,g_0)$ are explicitly computed in spherical coordinates. On $(\sph^n,g_0)$ $\alpha$-Killing spinors only exist for $\alpha=\pm1/2$, and via the stereographic projection the pull-back of the~$\pm\frac{1}{2}$-Killing spinors have the form
\begin{equation}\label{eq:Killing on Rn}
 \Psi(x)
 =\parenthesis{\frac{2}{1+|x|^2}}^{\frac{n}{2}}\parenthesis{\mathds{1}\pm\gamma_{_{\R^n}}(x)}\Phi_0
\end{equation}
where~$\mathds{1}$ denotes the identity endomorphism of the spinor bundle~$\Sigma_{g_{\R^n}}\R^n$ and~$\Phi_0\in \C^N$, see e.g. \cite{ammannmass}.
 %%%%%%%%% %%%%%%%%% %%%%%%%%% %%%%%%%%% %%%%%%%%% %%%%%%%%% %%%%%%%%% %%%%%%%%% %%%%%%%%% %%%%%%%%% %%%%%%%%% %%%%%%%%% %%%%%%%%% %%%%%%%%%
  %%%%%%%%% %%%%%%%%% %%%%%%%%% %%%%%%%%% %%%%%%%%% %%%%%%%%% %%%%%%%%% %%%%%%%%% %%%%%%%%% %%%%%%%%% %%%%%%%%% %%%%%%%%% %%%%%%%%% %%%%%%%%%
  \subsection{Conformal covariance}\label{sec:conformalcovariance}
  In this section we recall known formulas that relate the Dirac and conformal Laplace operators for conformally equivalent metrics, see  e.g. \cite{diracspectrum,hitchin}. 
  
To this aim we explicitly label the various geometric objects with the metric~$g$, e.g.~$\Sigma_g M, \nabla^{g}$, $\D^g$, etc. 
  
Let~$f\in C^\infty(M)$ and consider the conformal metric $g_f=e^{2f}g$. 
This induces an isometric isomorphism of spinor bundles
\begin{equation}\label{eq:covariantoperators}
 \beta\equiv \beta_{g,g_f}\colon (\Sigma_{g}M, g)\to (\Sigma_{g_u}M, g_f)\,.
\end{equation}
There holds
\begin{align}\label{eq:conformaldirac}
 \D^{g_f}\beta(e^{-\frac{n-1}{2}f}\psi)
 = e^{-\frac{n+1}{2}f}\beta(\D^g\psi),
\end{align}
\begin{align}\label{eq:conformalpenrose}
 L_{g_f}(e^{-\frac{n-2}{2}f}u)=e^{-\frac{n+2}{2}f}L_g u\,.  
\end{align}

The following quantities are conformally invariant. Setting 
\be\label{eq:rescaling}
\varphi:=\beta(e^{-\frac{n-1}{2}f}\psi)\,,\qquad v:=e^{-\frac{n-2}{2}f}u
\ee
there holds
\begin{equation}
\label{eq:conformalinvariance}
\begin{split}
\int_{M} uL_g u \,\dd{\vol_g} &=\int_{M}vL_{g_f} v\, \dd{\vol_{g_f}}\,,\qquad\int_{M} \langle\D^g \psi,\psi\rangle\,\dd{\vol_g} =  \int_{M} \langle\D^{g_f} \varphi,\varphi\rangle \,\dd{\vol_{g_f}} \\
\int_{M}\vert u\vert^6 \dd{\vol_g}&=\int_{M}\vert v\vert^6 \dd{\vol_{g_f}}\,,\qquad\int_{M} |\psi|^{3}\,\dd{\vol_g} =  \int_{M} |\varphi|^{3} \,\dd{\vol_{g_f}} \\
\int_{M}\vert u\vert^2\vert\psi\vert^2\dd{\vol_g} &=\int_{M}\vert v\vert^2\vert\varphi\vert^2\,\dd{\vol_{g_f}}  \,.
\end{split}
\end{equation}

Consequently the action \eqref{eq:Eaction} is conformally invariant, and hence also equation \eqref{el}. 
\medskip

In particular, by a conformal change of metric \eqref{eq:debubbles} can be reinterpreted as an equation on the round sphere $(\sph^3,g_0)$. Indeed, considering the stereographic projection $\pi:\sph^3\setminus{N}\to\R^3$, where $N\in\sph^3$ is the north pole, we have $(\pi^{-1})^*g_0=\frac{4}{(1+\vert x\vert^2)^2}g_{\R^3}$ with $x\in\R^3$. Then if $(u,\psi)\in D^{1}(\R^3)\times D^{1/2}(\R^3,\C^4)$ is a weak solution to \eqref{el}, consider $(u,\varphi)$ defined in by \eqref{eq:rescaling} with $e^{2f}=\frac{4}{(1+\vert x\vert^2)^2}$. Such pair is in $H^1(\sph^3)\times H^{1/2}(\Sigma_{g_0}\sph^3)$ by \eqref{eq:conformalinvariance}, and it is a weak solutions to \eqref{eq:spherebubbles}. Notice that a priori $(v,\varphi)$ is a weak solution only on $\sph^3\setminus \{N\}$. However, the removability of the singularity at the north pole $N$ can be proved by a cut-off argument similar to the one contained in Section \ref{sec:asymptotics}, see also \cite[Theorem 5.1]{ammannsmallest}. 

Moreover, by \eqref{eq:conformalinvariance} the functional \eqref{eq:Eaction} is also conformally invariant, i.e.
\[
E_{\R^3}(u,\psi)=E_{\sph^3}(v,\varphi)\,.
\]

   %%%%%%%%% %%%%%%%%% %%%%%%%%% %%%%%%%%% %%%%%%%%% %%%%%%%%% %%%%%%%%% %%%%%%%%% %%%%%%%%% %%%%%%%%% %%%%%%%%% %%%%%%%%% %%%%%%%%% %%%%%%%%% %%%%%%%%% %%%%%%%%% %%%%%%%%% %%%%%%%%% %%%%%%%%% %%%%%%%%% %%%%%%%%% %%%%%%%%% %%%%%%%%% %%%%%%%%% %%%%%%%%% %%%%%%%%% %%%%%%%%% %%%%%%%%% %%%%%%%%% %%%%%%%%% %%%%%%%%% %%%%%%%%% %%%%%%%%% %%%%%%%%% %%%%%%%%%

 %%%%%%%%%%%%%%%%%%%%%%%%%%%%%%%%%%%%%%%%%%%%%%%%%%%%%%%%%%%%%%%%%%%%%%%%%%%%%%%%%%%%%%%
%%%%%%%%%%%%%%%%%%%%%%%%%%%%%%%%%%%%%%%%%%%%%%%%%%%%%%%%%%%%%%%%%%%%%%%%%%%%%%%%%%%%%%%
%%%%%%%%%%%%%%%%%%%%%%%%%%%%%%%%%%%%%%%%%%%%%%%%%%%%%%%%%%%%%%%%%%%%%%%%%%%%%%%%%%%%%%%
\section{Regularity and asymptotics}
\subsection{Regularity}

We first need the following (see e.g. \cite{borrellifrank,GrWu} ) Liouville-type result.

\begin{lemma}\label{lem:liouville}
Let $p,q\geq 1$ and assume that $(u,\psi)\in L^p(\R^3)\times L^q(\Sigma\R^3)$ satisfies 
\be
\left\{\begin{aligned}
   -c_3\Delta u=0\,, \\ 
   \D\psi=0 \,
\end{aligned}\right.
\ee
on $\R^3$ in the sense of distributions. Then $u\equiv 0$ and $\D\psi\equiv0$.
\end{lemma}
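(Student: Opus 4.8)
The plan is to prove the two statements separately, since the system decouples at the level of harmonic/Dirac-harmonic objects. For the scalar part: a function $u\in L^p(\R^3)$ with $p\geq 1$ that is harmonic in the distributional sense is, by Weyl's lemma, smooth and harmonic in the classical sense; then the mean-value property forces $u$ to be bounded by an integral average that decays, or more directly one invokes the Liouville theorem for harmonic functions in an $L^p$ space. The cleanest route is: mollify to get that $u$ is smooth and harmonic, then observe that for any $R>0$ the value $u(0)$ equals the average of $u$ over the ball $B_R$, so $|u(0)|\leq C R^{-3}\|u\|_{L^1(B_R)} \leq C R^{-3/p'}\|u\|_{L^p(\R^3)}$ by H\"older (with $p'$ the conjugate exponent, interpreting $R^{-3/p'}\to 0$ as $R\to\infty$ when $p>1$; for $p=1$ one uses instead that $\|u\|_{L^1(B_R^c)}\to 0$ and a slightly more careful averaging argument, or notes that an $L^1$ harmonic function on $\R^n$ must vanish since its gradient estimates force it to be constant and an $L^1$ constant is zero). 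Translating the center, $u\equiv 0$.

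For the spinorial part: here I would first note that $\D^2 = -\Delta$ on $\R^3$ (with the trivial spin connection), so if $\D\psi = 0$ distributionally then each component of $\psi$ is harmonic, hence smooth by Weyl's lemma, and $\psi$ is a classical solution of $\D\psi=0$. Now the components of $\psi$ are harmonic functions in $L^q(\R^3)$, $q\geq 1$, so by the same Liouville argument as above applied componentwise, $\psi\equiv 0$, and in particular $\D\psi\equiv 0$. So in fact the spinorial conclusion is stronger than stated — one gets $\psi\equiv 0$ outright — but the lemma only claims $\D\psi\equiv 0$, which is immediate. The key technical input in both cases is the combination of (i) elliptic regularity / Weyl's lemma to pass from distributional to classical solutions, and (ii) a Liouville-type statement: a harmonic function on $\R^n$ lying in $L^q$ for some finite $q$ must be identically zero.

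The main obstacle, and the only place requiring genuine care, is the endpoint case $q=1$ (or $p=1$): the naive mean-value bound $|u(0)|\leq CR^{-n}\|u\|_{L^1(B_R)}$ only gives $|u(0)|\leq CR^{-n}\|u\|_{L^1(\R^3)}$, which already shows $u\equiv 0$ after letting $R\to\infty$ — so actually $L^1$ is fine too and is if anything the easiest case. The subtler point is making sure the mollification argument is valid for merely $L^1_{\mathrm{loc}}$ distributions and that the resulting classical harmonic function genuinely still lies in $L^p$; this is standard (mollified harmonic functions converge locally uniformly, and Fatou/dominated convergence controls the $L^p$ norm). I would also remark that for the spinor one could alternatively argue via the Fourier transform: $\D\psi=0$ means the symbol $\gamma(\xi)\widehat\psi(\xi)$ vanishes as a distribution, and since $\gamma(\xi)$ is invertible for $\xi\neq 0$, $\widehat\psi$ is supported at the origin, hence $\psi$ is a polynomial, which in $L^q(\R^3)$ forces $\psi\equiv 0$ — this gives the result in one stroke without invoking $\D^2=-\Delta$, though it requires $\psi$ to be a tempered distribution, which follows from $\psi\in L^q$.
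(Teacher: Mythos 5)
Your argument is correct, and it fills in a proof the paper itself omits: Lemma~\ref{lem:liouville} is stated with only a pointer to \cite{GrWu} (for the harmonic Liouville theorem) and \cite{borrellifrank} (for the Dirac case), so there is no in-text proof to compare against, but the mean-value--plus--H\"older argument for $u$ and the reduction to componentwise harmonicity via $\D^2=-\Delta$ (or, alternatively, the Fourier support argument localizing $\widehat\psi$ at the origin) for $\psi$ are precisely the standard routes behind those references. Two small remarks. There is a cosmetic slip in the exponent: averaging over $B_R$ and H\"older give
$|u(x_0)|\leq C\,|B_R|^{-1}\|u\|_{L^1(B_R)}\leq C\,R^{-3/p}\|u\|_{L^p(\R^3)}$,
with $3/p$ rather than $3/p'$; this does not affect the conclusion, since $R^{-3/p}\to 0$ for every $p\geq 1$ (and, as you correctly observe, the endpoint $p=1$ is if anything the easiest case). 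You also rightly note that the printed conclusion $\D\psi\equiv 0$ merely restates the hypothesis; the way the lemma is invoked in Lemma~\ref{integraleq} (to identify $\psi$ with $\Gamma\ast(|u|^2\psi)$, i.e.\ to conclude $\psi-\tilde\psi\equiv 0$ from $\D(\psi-\tilde\psi)=0$) requires the stronger conclusion $\psi\equiv 0$, which is exactly what your argument delivers, so the weaker phrasing in the statement is a typo.
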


We now use the above lemma to rewrite the first equation in \eqref{eq:debubbles} in integral form. The \emph{Green's functions} of $-c_3\Delta$ and $\D$ are given, respectively, by
\be\label{greendirac}
G(x-y)=-\frac{1}{4\pi\vert x-y\vert}\,,\qquad \Gamma(x-y)=\frac{\imath}{4\pi}\boldsymbol{\alpha}\cdot\frac{x-y}{\vert x-y\vert^{3}}
\ee
where $\boldsymbol{\alpha}=(\alpha_1,\alpha_2,\alpha_3)$ and the $\alpha_j$ are $4\times 4$ Hermitian matrices satisfying the anticommutation relations
\be\label{anticommutation}
\alpha_{j}\alpha_{k}+\alpha_{k}\alpha_{j}=2\delta_{j,k} \,,\qquad 1\leq j,k\leq 3 \,.
\ee
We choose the $\alpha_j$ of a particular block-antidiagonal form, namely, let $\sigma_1,\sigma_2, \sigma_3$ be $2\times2$ Hermitian matrices satisfying analogous anticommutation relations as in \eqref{anticommutation}, that is,
$$\sigma_{j}\sigma_{k}+\sigma_{k}\sigma_{j}=2\delta_{j,k},\qquad 1\leq j,k\leq 3. $$
Then the matrices
\be\label{sigmaanticommutation}
\alpha_{j}=\begin{pmatrix}0 & \sigma_{j} \\ \sigma_{j} & 0 \end{pmatrix}, \qquad 1\leq j\leq 3,
\ee
satisfy \eqref{anticommutation} and we shall work in the following with this choice. Such matrices exist and form a representation of the \emph{Clifford algebra} of the euclidean space $\R^3$, so that 
\[
\D:=-\imath\boldsymbol{\alpha}\cdot\nabla=-\imath\sum^{3}_{j=1}\alpha_{j}\partial_{x_{j}}
\]

\begin{lemma}\label{integraleq}
If $(u,\psi)\in L^{6}(\R^3)\times L^3(\Sigma\R^3)$ solves \eqref{eq:debubbles} in the sense of distributions, then
\be\label{eq:integralform}
u=G\ast(\vert\psi\vert^{2}u) \,,\qquad \psi=\Gamma\ast(\vert u\vert^2\psi)\,.
\ee
\end{lemma}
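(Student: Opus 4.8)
The plan is to show that the difference between $u$ (resp.\ $\psi$) and its would-be integral representation is a tempered distribution that is harmonic (resp.\ Dirac-harmonic), lies in a suitable $L^p$ space, and hence vanishes by Lemma~\ref{lem:liouville}. First I would check that the right-hand sides of \eqref{eq:debubbles} are integrable enough to make the convolutions in \eqref{eq:integralform} well defined: since $u\in L^6$ and $\psi\in L^3$, we have $|\psi|^2 u\in L^{6/5}$ and $|u|^2\psi\in L^{3/2}$ by H\"older, so the source terms sit in $L^{6/5}(\R^3)$ and $L^{3/2}(\Sigma\R^3)$ respectively. The Green's kernel $G(x)=-\tfrac{1}{4\pi|x|}$ is the standard Newtonian potential; by the Hardy--Littlewood--Sobolev inequality, convolution with $|x|^{-1}$ maps $L^{6/5}(\R^3)$ into $L^{6}(\R^3)$ (and more generally controls the Riesz potential $I_2$), so $G\ast(|\psi|^2u)$ is a well-defined element of $L^6$; similarly the Dirac kernel $\Gamma(x)=\tfrac{\I}{4\pi}\boldsymbol\alpha\cdot x/|x|^3$ is (a matrix of) first-order Riesz transforms of the Newtonian potential, equivalently a Riesz potential of order $1$, so $\Gamma\ast(|u|^2\psi)\in L^{3}(\Sigma\R^3)$ by the HLS inequality with exponents $3/2\to 3$.

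Next I would verify that, in the sense of distributions, $-c_3\Delta\big(G\ast(|\psi|^2 u)\big)=|\psi|^2 u$ and $\D\big(\Gamma\ast(|u|^2\psi)\big)=|u|^2\psi$, which is just the statement that $G$ and $\Gamma$ are fundamental solutions of $-c_3\Delta$ and $\D$; here one uses that $-c_3\Delta G=\delta_0$ and $\D\Gamma=\delta_0$ (the latter because $\D$ applied to the Newtonian potential of $\R^3$ reproduces, up to the Clifford factor, a derivative of $-1/(4\pi|x|)$, and $\D^2=-\Delta$). Setting $w:=u-G\ast(|\psi|^2u)$ and $\chi:=\psi-\Gamma\ast(|u|^2\psi)$, the equations \eqref{eq:debubbles} together with the previous identities give $-c_3\Delta w=0$ and $\D\chi=0$ in $\mathcal D'(\R^3)$. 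Moreover $w\in L^6(\R^3)$ since both $u$ and $G\ast(|\psi|^2u)$ are, and $\chi\in L^3(\Sigma\R^3)$ likewise. Applying Lemma~\ref{lem:liouville} with $(p,q)=(6,3)$ forces $w\equiv 0$ and $\D\chi\equiv 0$; but $\D\chi=0$ already, so we additionally need $\chi\equiv 0$. Since $\chi\in L^3$ and $\D\chi=0$, each component of $\chi$ is harmonic (as $\Delta=-\D^2$), hence smooth, and an $L^3$ harmonic function on $\R^3$ vanishes by the mean value property combined with H\"older; this gives $\chi\equiv0$ and therefore the two identities in \eqref{eq:integralform}.

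The main technical point to be careful about is the justification that $\D$ applied to the convolution $\Gamma\ast(|u|^2\psi)$ is literally $|u|^2\psi$ rather than $|u|^2\psi$ plus a distribution supported at the origin or some nonlocal correction: because $\Gamma$ is a Calder\'on--Zygmund-type kernel (homogeneous of degree $-2$ in $\R^3$, i.e.\ borderline for local integrability), one should interpret $\Gamma\ast g$ either via the identity $\Gamma=\D(G\,\cdot)$, writing $\Gamma\ast g=\D(G\ast g)$, so that $\D(\Gamma\ast g)=\D^2(G\ast g)=-\Delta(G\ast g)=c_3^{-1}g\cdot c_3=g$ after fixing the normalization $-c_3\Delta G=\delta_0$; this routes everything through the well-behaved Newtonian potential and sidesteps any principal-value subtlety. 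I would also record that all manipulations are legitimate because the source terms lie in $L^{6/5}$ and $L^{3/2}$, which are exactly the spaces where the Riesz potentials $I_2$ and $I_1$ act boundedly into $L^6$ and $L^3$; no further regularity of $(u,\psi)$ is needed at this stage, the bootstrap to $C^\infty$ being carried out subsequently from the integral representation \eqref{eq:integralform}.
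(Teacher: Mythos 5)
Your proof is correct and follows essentially the same approach as the paper: show the source terms lie in $L^{6/5}$ and $L^{3/2}$ by H\"older, convolve with the respective Green's functions to land back in $L^6$ and $L^3$, subtract from $(u,\psi)$, and invoke Lemma~\ref{lem:liouville}. The paper uses the weak Young inequality with $G\in L^{3,\infty}$ and $\Gamma\in L^{3/2,\infty}$ where you quote Hardy--Littlewood--Sobolev; these are interchangeable. You also correctly noticed that Lemma~\ref{lem:liouville} as printed concludes only ``$\D\psi\equiv 0$'' (certainly a misprint for $\psi\equiv 0$), and you filled that gap by observing that a Dirac-harmonic $L^3$ spinor has harmonic components (since $\D^2=-\Delta$) and hence vanishes; the paper simply says the argument applies to $\psi$ ``with obvious modifications''. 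One small correction to your own worry: $\Gamma$ is homogeneous of degree $-2$ in $\R^3$ and thus locally integrable, so $\Gamma\ast(|u|^2\psi)$ converges absolutely and no principal-value interpretation is needed --- your detour via $\Gamma\ast g=\D(G\ast g)$ is valid but unnecessary.
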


\begin{proof}
We note the Green functions have the following weak-Lebesgue integrability, namely, $G\in L^{3,\infty}$ and $\Gamma\in L^{3/2,\infty}$. Since $\psi\in L^{3}$ and $u\in L^6$, we have $\vert\psi\vert^{2}u\in L^{6/5}$ and therefore by the weak Young inequality, the function  
$$
\tilde{u}:=G\ast(\vert\psi\vert^{2}u)
$$
satisfies
$$
\tilde{u}\in L^{6}(\R^3) \,.
$$
Moreover, it is easy to see that
$$
-c_3\Delta\tilde{u} = |\psi|^{2}u
\qquad\text{in}\ \R^3
$$
in the sense of distributions. This implies that
$$
-\Delta (u-\tilde{u})=0
\qquad\text{in}\ \R^3
$$
in the sense of distributions and therefore, by Lemma \ref{lem:liouville}, $u=\tilde{u}$, as claimed. With obvious modifications the argument applies to $\psi$, thus concluding the proof.
\end{proof}

We remark that regularity of weak solutions to \eqref{eq:debubbles} does not follow by standard bootstrap arguments, as we are dealing with critical equations. In the following proposition we follow the strategy of \cite{borrellifrank}, where the authors deal with critical Dirac equations.
\begin{proposition}\label{prop:regularity}
Any distributional solution $(u,\psi)\in L^{6}(\R^3)\times L^3(\Sigma\R^3)$ to \eqref{eq:debubbles} is smooth.
\end{proposition}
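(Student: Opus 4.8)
The plan is to run a bootstrap scheme adapted to the criticality of the system, exploiting the integral representation from Lemma~\ref{integraleq} together with the mapping properties of the Green's functions $G\in L^{3,\infty}$ and $\Gamma\in L^{3/2,\infty}$. The key obstacle is that one cannot directly gain integrability: convolving $G$ with $|\psi|^2 u\in L^{6/5}$ only recovers $u\in L^6$, with no improvement, and similarly for $\psi$; this is precisely the scale-invariant borderline. Following \cite{borrellifrank}, the way around this is a cutting argument: split $\psi = \psi\mathbbm{1}_{\{|\psi|\leq K\}} + \psi\mathbbm{1}_{\{|\psi|>K\}}$ (and likewise $u$), where the ``bad'' high-value part has small $L^3$ (resp.\ $L^6$) norm by dominated convergence, so that the corresponding contribution to the convolution can be absorbed via a fixed-point/contraction estimate in a slightly better space, while the ``good'' bounded part gives a genuinely subcritical source term whose convolution lands in a higher Lebesgue space.

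Concretely, I would first show $(u,\psi)\in L^p_{loc}\times L^q_{loc}$ for some $p>6$, $q>3$. Fix a small ball $B$. Write $|\psi|^2u = (|\psi|^2u)\mathbbm{1}_{\{|\psi|>K\}} + (|\psi|^2u)\mathbbm{1}_{\{|\psi|\leq K\}}$ on $B$. The second term is in $L^r$ for every $r<\infty$ once we know $u\in L^6$, in particular in some $L^r$ with $r>6/5$; by weak Young its convolution with $G$ is in $L^{s}_{loc}$ with $s>6$. For the first term, one uses that $\|\psi\mathbbm{1}_{\{|\psi|>K\}}\|_{L^3(B)}\to 0$ as $K\to\infty$; combined with H\"older and the weak Young inequality on the slightly-better target exponent, this piece defines a contraction on a ball in $L^{s}(B)$ for $K$ large enough, whence the integral equation has a (unique) solution there and it coincides with $u$. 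The same device applied to $\psi=\Gamma\ast(|u|^2\psi)$, using $\|u\mathbbm{1}_{\{|u|>K\}}\|_{L^6(B)}\to 0$, upgrades $\psi$ to $L^{q}_{loc}$ with $q>3$. One has to interleave the two bootstraps since the source of each equation involves the other field.

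Once we are strictly subcritical, I would iterate: with $u\in L^p_{loc}$, $\psi\in L^q_{loc}$ for $p>6$, $q>3$, the products $|\psi|^2u$ and $|u|^2\psi$ lie in better Lebesgue spaces, and convolution with $G\in L^{3,\infty}$, $\Gamma\in L^{3/2,\infty}$ (again via weak Young) pushes the exponents up; repeating finitely many times gives $u,\psi\in L^\infty_{loc}$, and indeed $u\in W^{2,r}_{loc}$, $\psi\in W^{1,r}_{loc}$ for all $r<\infty$ by elliptic regularity applied to $-c_3\Delta u = |\psi|^2u\in L^\infty_{loc}$ and $\D\psi=|u|^2\psi\in L^\infty_{loc}$ (using that $\D$ is elliptic first-order and $\D^2 = -\Delta$ componentwise). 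Then Schauder/Sobolev bootstrap closes the loop: $u,\psi\in C^{1,\gamma}_{loc}$ makes the right-hand sides $C^{1,\gamma}_{loc}$, hence $u\in C^{3,\gamma}_{loc}$, $\psi\in C^{2,\gamma}_{loc}$, and inductively $u,\psi\in C^\infty$. The main work — and the only genuinely nontrivial point — is the first step, i.e.\ breaking the criticality via the cut-off/contraction argument; everything afterwards is a routine (if slightly tedious) elliptic bootstrap.
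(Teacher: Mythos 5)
Your scheme is a valid route to the same endpoint, but it is not the one the paper takes: you argue \emph{locally}, on a fixed ball $B$, and set up a contraction for the operator $w\mapsto G\ast(|\psi|^2\mathbbm{1}_{\{|\psi|>K\}}\,w\,\mathbbm{1}_B)$ in $L^s(B)$, $s>6$, whereas the paper argues \emph{globally} on $\R^3$ via a duality/self-improving estimate: one tests against $v$ constrained by $\|v\|_{r'}\leq 1$ and $\|v\|_{6/5}\leq M$, uses the decomposition $|\psi|^2=f_\eps+g_\eps$ with $f_\eps$ bounded and of finite-measure support and $\|g_\eps\|_{3/2}<\eps$, pushes the convolution onto $v$ by Fubini, and absorbs the $g_\eps$-piece back into the supremum $S_M$ to get $S_M\leq C''+\tfrac12 S_M$, hence $u\in L^r$. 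Your approach is the local ``Brezis--Kato'' variant of the same cutting idea, and it spares you the need to truncate from below (which the paper does to keep $f_\eps$ finite-measure-supported on all of $\R^3$). On the other hand you pay a price in bookkeeping that you currently gloss over. First, $(|\psi|^2 u)\mathbbm{1}_{\{|\psi|\leq K\}}$ on $B$ is in $L^6(B)$, not in ``$L^r$ for every $r<\infty$''; the conclusion you want (that its Newtonian potential is bounded, hence in every $L^s_{\rm loc}$) is correct, but it follows from the fact that this source is in $L^p(B)$ with $p>3/2$ and compactly supported, not from the over-strong statement you wrote. Second, the Green representation $u=G\ast(|\psi|^2u)$ integrates the source over all of $\R^3$, so after localizing you must carry along the contribution of $|\psi|^2 u$ from outside $B$; this is a smooth function on compact subsets of $B$, hence harmless, but it has to be said (and one then has to shrink the ball once more). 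Third, ``it coincides with $u$'' is the crux and deserves one sentence: the truncated operator is simultaneously a contraction on $L^6(B)$ and on $L^s(B)$ for $K$ large, and $u$ is its unique fixed point in $L^6(B)$, so the $L^s$-fixed point agrees with $u$. With those three points patched, your first step breaks criticality, and the subsequent subcritical iteration and Schauder bootstrap are routine, as you say. So: a genuinely different and workable argument, slightly less clean than the paper's global duality trick but perhaps more transparent about why the truncation gains regularity.
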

\begin{proof}
Since the nonlinearity in \eqref{eq:debubbles} is smooth, it suffices to prove that $(u,\psi)\in L^\infty(\R^3)\times L^\infty(\Sigma\R^3)$, as standard elliptic regularity then applies to give smoothness.

We only deal with $u$ as the argument for the spinorial part $\psi$ follows along the same lines, with straightforward modifications. 
\smallskip

Fix $r>6$. We claim that $u\in L^r(\R^3)$ for $6\leq r<\infty$. To this aim, we prove that there exists $C>0$ such that for all $M>0$ there holds
\be\label{eq:sup}
S_M:=\sup\left\{\left\vert\int_{\R^3}\overline{v}u\dd x\right\vert\,:\, \Vert v\Vert _{r'}\leq1\,,\,\Vert v\Vert_{6/5}\leq M \right\}\leq C\,,
\ee
so that 
\[
\sup\left\{\left\vert\int_{\R^3}\overline{v}u\dd x\right\vert\,:\, \Vert v\Vert _{r'}\leq1\,,\,v\in L^{6/5} \right\}\leq C\,,
\]
and then by density and duality, $u\in L^r$. 

Now fix $M>0$ and let $\eps>0$ be a constant to be chosen later. The function
\[
f_\eps:=\vert\psi\vert^2\mathbbm{1}_{\{\delta\leq\vert\psi\vert\leq\mu\}}
\]
is bounded and supported on a set of finite measure. Moreover, we have
\[
\Vert\vert\psi\vert^2-f_\eps \Vert^{3/2}_{3/2}=\int_{\{\vert\psi\vert<\delta\}\cup\{\vert\psi\vert>\mu\}}\vert\psi\vert^3\dd x<\eps
\]
for suitable $\delta,\mu>0$, as $\psi\in L^{3}$. Define $g_\eps:=\vert\psi\vert^2-f_\eps$ and take $v\in L^{r'}\cap L^{6/5}$, with $\Vert v\Vert_{r'}\leq 1$ and $\Vert v\Vert_{6/5}\leq M$.

By \eqref{eq:integralform}, there holds
\[
\int_{\R^3}\overline{v}u\dd x=\int_{\R^3}\overline{v}(G\ast(f_\eps u))\dd x +\int_{\R^3}\overline{v}(G\ast(g_\eps u))\dd x\,.
\]
Using Fubini's theorem we can rewrite the second integral on the right-hand side, as follows:
\be
\begin{split}
\int_{\R^3}\overline{v}(G\ast(g_\eps u))\dd x &= \int_{\R^3}\dd x\,\overline{v}(x)\int_{\R^3}G(x-y)(g_\eps(y) u(y))\dd y \\
&=\int_{\R^3}\dd y\int_{\R^3}\dd x (G(x-y)\overline{v}(x))\,g_\eps(y)u(y) \\
&= \int_{\R^3}(G\ast\overline{v})g_\eps u\dd y\,.
\end{split}
\ee
Applying again the same argument, and since $u=G\ast(\vert\psi\vert^2)u$, we can further rewrite the last integral to get
\be\label{eq:split}
\int_{\R^3}\overline{v}u\dd x=\int_{\R^3}\overline{v}(G\ast(f_\eps u))\dd x+\int_{\R^3}\overline{h_\eps}u\dd x\,,
\ee
where
\be\label{eq:h}
h_\eps:=\vert\psi\vert^2 G\ast(g_\eps(G\ast v))\,.
\ee
We now estimate the two terms in \eqref{eq:split}. Choosing $s:=\frac{3r}{3+2r}$, the first integral in \eqref{eq:split} can be bounded using the H\"older and Young inequalities
\be\label{eq:firstbound}
\begin{split}
\left\vert \int_{\R^3}\overline{v}(G\ast(f_\eps u))\dd x  \right\vert &\leq\Vert v\Vert_{r'}\Vert G\ast(f_\eps u)\Vert_{r}\leq\Vert v\Vert_{r'}\Vert G\Vert_{3,\infty}\Vert f_\eps u\Vert_{s}\\
&\leq \Vert v\Vert_{r'}\Vert G\Vert_{3,\infty}\Vert f_\eps\Vert_{\frac{6s}{6-s}}\Vert u\Vert_6\leq C_\eps\,,
\end{split}
\ee
where the constant $C_\eps$ depends on $\eps,r,\psi, u$ but not on $M$.

We now turn to the second integral on the righ-hand side of \eqref{eq:split}. By \eqref{eq:h}, H\"older and Young inequalities give
\be\label{eq:first_h_estimate}
\begin{split}
\Vert h_\eps\Vert_{r'}&\leq\Vert \vert\psi\vert^2\Vert_{3/2}\Vert G\ast(g_\eps(G\ast v)) \Vert_{s'}\leq \Vert\vert\psi\vert^2\Vert_{3/2}\Vert G\Vert_{3,\infty}\Vert g_\eps(G\ast v)\Vert_{r'} \\
&\leq\Vert\vert\psi\vert^2\Vert_{3/2}\Vert G\Vert_{3,\infty}\Vert g_\eps\Vert_{3/2}\Vert G\ast v\Vert_{s'}\leq \Vert\vert\psi\vert^2\Vert_{3/2}\Vert G\Vert^2_{3,\infty}\Vert g_\eps\Vert_{3/2}\Vert v\Vert_{r'} \\
&\leq C' \Vert g_\eps\Vert_{3/2}\Vert v\Vert_{r'}\,,
\end{split}
\ee
where the constant $C'>0$ depends on $\psi$. Similarly, we get
\be\label{eq:second_h_estimate}
\begin{split}
\Vert h_\eps\Vert_{6/5}&\leq\Vert\vert\psi\vert^2\Vert_{3/2}\Vert G\ast(g_\eps(G\ast v))\Vert_6\leq\Vert\vert\psi\vert^2\Vert_{3/2}\Vert G\Vert_{3,\infty}\Vert g_\eps(G\ast v)\Vert_{6/5} \\
&\leq\Vert\vert\psi\vert^2\Vert_{3/2}\Vert G\Vert_{3,\infty}\Vert g_\eps\Vert_{3/2}\Vert(G\ast v)\Vert_{6}\leq\Vert\vert\psi\vert^2\Vert_{3/2}\Vert G\Vert^2_{3,\infty}\Vert g_\eps\Vert_{3/2}\Vert v\Vert_{6/5}\\
&\leq C'\Vert g_\eps\Vert_{3/2}\Vert v\Vert_{6/5}\,.
\end{split}
\ee
The estimates \eqref{eq:first_h_estimate} and \eqref{eq:second_h_estimate} imply that 
\[
\left\vert \int_{\R^3}\overline{h}_\eps u\dd x\right\vert\leq C'\Vert g_\eps\Vert_{3/2}S_M\leq C'\eps S_M\,,
\]
by \eqref{eq:sup}. Then taking $\eps=(2C')^{-1}$ and combining the above estimate with \eqref{eq:firstbound} we get
\[
\left\vert\int_{\R^3}\overline{v}u\dd x \right\vert\leq C''+\frac{1}{2}S_M\,,
\]
where $C''$ is the constant $C_\eps$ for $\eps=(2C')^{-1}$. Taking the supremum over all $v$ we have
\[
S_M\leq C''+\frac{1}{2}S_M\implies S_M\leq 2C''\,,
\]
and the claim is proved. A similar argument works for $\psi$, so that we conclude that 
\[
(u,\psi)\in L^r(\R^3)\times L^s(\Sigma\R^3)\,,\qquad \mbox{for all $r\geq6, s\geq 3$}\,,
\]
so that $\vert\psi\vert^2u\in L^t(\R^3)$ for all $t>3/2$. By \eqref{eq:integralform}, we have
\[
u(x)=\int_{\R^3}G(x-y)\vert\psi(y)\vert^2u(y)\dd y\,,
\]
and then writing $G$ as the sum of a function in $L^p$ and one in $L^q$, for some $1<p<3<q$, the H\"older inequality gives $u\in L^\infty$. Arguing along the same lines one proves that $\psi\in L^\infty$, and the proof is concluded.
\end{proof}

%%%%%%%%%%%%%%%%%%%%%%%%%%%%%%%%%%%%%%%%%%%%%%%%%%%%%%%%%%%%%%%%%%%%%%%%%%%%%%%%%%%%%%%
\subsection{Existence of asymptotics}\label{sec:asymptotics}
The existence of an asymptotic expansion at infinity for solutions to \eqref{eq:debubbles} follows by the regularity results of the previous section, via the \emph{Kelvin transform} (see \cite[Section 4]{borrellifrank} for the spinorial case). 
Given a function $u$ and a spinor $\psi$ on $\R^3$, it is defined as
\be\label{eq:kelvin}
\left\{\begin{aligned}
    u_{\mathcal{K}}(x):=\vert x\vert^{-1}u(x/\vert x\vert^2)\,, \\ 
   \psi_{\mathcal{K}}(x):=\vert x\vert^{-2}\mathcal{U}(x)\psi(x/\vert x\vert^2) \,
\end{aligned}\right.\qquad x\in\R^3\setminus\{0\}\,,
\ee
where the unitary matrix $\mathcal{U}(x)$ is defined as
\be
\label{eq:defu}
\mathcal{U}(x) = \begin{pmatrix}
0 & -\I\frac{x}{|x|}\cdot\boldsymbol\sigma \\ \I \frac{x}{|x|}\cdot\boldsymbol\sigma & 0
\end{pmatrix} \,,\qquad x\in\R^3\setminus\{0\}\,,
\ee
where $\boldsymbol{\sigma}=(\sigma_1,\sigma_2,\sigma_3)$ (see previous Section).

Let $(u,\psi)\in L^6(\R^3)\times L^3(\Sigma\R^3)$ be a distributional solution to \eqref{eq:debubbles}. Then, as proved in the previous section,  $(u,\psi)\in C^\infty(\R^3)\times C^\infty(\Sigma\R^3)$ and it solves the equations in the classical sense. The transformed couple $(u_{\mathcal{K}},\psi_{\mathcal{K}})$ is smooth as well, and there holds
\be\label{eq:kelvinoperators}
\Delta u_{\mathcal{K}}=\vert x\vert^{-4}(-\Delta u)_{\mathcal{K}}\,,\qquad \D\psi_{\mathcal{K}}=\vert x\vert^{-2}(\D\psi)_{\mathcal{K}}\,.
\ee
The computation for the scalar part is a classical one from potential theory \cite[Section 1.8]{helms-potential}, while the spinorial one can be found in \cite[Section 4]{borrellifrank}. 
Using \eqref{eq:kelvin} and \eqref{eq:kelvinoperators} it is not hardy to see that,
\begin{equation}
\label{eq:inversionlq}
\begin{split}
\int_{\R^3} u_{\mathcal{K}}(-\Delta u_{\mathcal{K}}) \dd x&=\int_{\R^3}u(-\Delta u) \dd x\,,\qquad\int_{\R^3} \langle\D \psi_{\mathcal{K}},\psi_{\mathcal{K}}\rangle\,\dd x =  \int_{\R^3} \langle\D \psi,\psi\rangle \,\dd x \\
\int_{\R^3}\vert u_{\mathcal{K}}\vert^6 \dd x&=\int_{\R^3}\vert u\vert^6 \dd x\,,\qquad\int_{\R^3} |\psi_{\mathcal{K}}|^{3}\,\dd x =  \int_{\R^3} |\psi|^{3} \,\dd x \\
\int_{\R^3}\vert u_{\mathcal{K}}\vert^2\vert\psi_{\mathcal{K}}\vert^2\dd x &=\int_{\R^3}\vert u\vert^2\vert\psi\vert^2\dd x  \,.
\end{split}
\end{equation}
Moreover, by \eqref{eq:kelvinoperators}, one can easily verify that $(u_{\mathcal{K}}, \psi_{\mathcal{K}})$ is smooth and solves \eqref{eq:debubbles} on $\R^3\setminus\{0\}$, and thus is a distributional solution on that set. We now prove that the equation is actually verified in distributional sense on the whole $\R^3$, that is, the origin is a removable singularity.

We only work out the argument for the scalar part $u_{\mathcal{K}}$, as a similar argument applies to $\psi_{\mathcal{K}}$.
We aim at showing that
\be\label{eq:distrsol}
\int_{\R^3}(-\Delta f)u_{\mathcal{K}}\dd x=\int_{\R^3}fu_{\mathcal{K}}\vert\psi_{\mathcal{K}}\vert^2\dd x\,,\qquad\forall f\in C^{\infty}_c(\R^3)\,.
\ee
Let $\eta\in C^\infty(\R^3)$ be (real-valued) a cutoff such that $\eta\equiv1$ outside the unit ball and $\eta\equiv0$ near the origin, and define $\eta_\eps(x):=\eta_\eps(x/\eps)$. Since $\eta_\eps f\in C^\infty_c(\R^3\setminus\{0\})$, there holds
\be\label{eq:outsideorigin}
\int_{\R^3}-\Delta(\eta_\eps f)u_{\mathcal{K}}\dd x=\int_{\R^3}\eta_\eps fu_{\mathcal{K}}\vert\psi_{\mathcal{K}}\vert^2\dd x\,,\qquad\forall f\in C^{\infty}_c(\R^3)\,.
\ee
We have that $\eta_\eps\overline{f}u_{\mathcal{K}}\vert\psi_{\mathcal{K}}\vert^2\in L^1$, $\vert\eta_\eps\vert\leq\Vert\eta\Vert_\infty$ and $\eta_\eps\to1$ almost everywhere, as $\eps\to0$. Then, by dominated convergence, the right-hand side of \eqref{eq:outsideorigin} converges to the second integral in \eqref{eq:distrsol}. Observe that
\be\label{eq:laplacian}
-\Delta(\eta_\eps f)=\eta_\eps(-\Delta f)-2\nabla\eta_\eps\cdot\nabla f-(-\Delta\eta_\eps)f\,.
\ee
Arguing as above, one sees that the contribution to \eqref{eq:outsideorigin} of the first term on the right-hand side of \eqref{eq:laplacian} converges to the first integral in \eqref{eq:distrsol}. 

The other to terms go to zero, as $\eps\to0$. Indeed, there holds
\be
\begin{split}
\left\vert\int_{\R^3}\nabla\eta_\eps\cdot\nabla u_{\mathcal{K}} f\dd x  \right\vert&=\left\vert\int_{\{\vert x\vert<\eps\}}\nabla\eta_\eps\cdot\nabla u_{\mathcal{K}} f\dd x  \right\vert\\ 
&\leq\Vert f\Vert_\infty\Vert\nabla\eta_\eps\Vert_\infty\Vert \nabla u_{\mathcal{K}}\Vert_{2}\vert\{\vert x\vert<\eps\}\vert^{1/2}\lesssim \sqrt{\eps}\,,
\end{split}
\ee
and
\be
\begin{split}
\left\vert\int_{\R^3}(-\Delta\eta_\eps) u_{\mathcal{K}}f\dd x  \right\vert&=\left\vert\int_{\{\vert x\vert<\eps\}}(-\Delta\eta_\eps) u_{\mathcal{K}}f\dd x  \right\vert\\ 
&\leq\Vert f\Vert_\infty\Vert\Delta\eta_\eps\Vert_\infty\Vert u_{\mathcal{K}}\Vert_{6}\vert\{\vert x\vert<\eps\}\vert^{5/6}\lesssim \sqrt{\eps}\,,
\end{split}
\ee
Then by the regularity result of the previous section we conclude that $(u_{\mathcal{K}},\psi_{\mathcal{K}})$ is a smooth solution of \eqref{eq:debubbles} on $\R^3$. Formulas \eqref{eq:uexpansion} and \eqref{eq:psiexpansion} follow by Taylor expanding $u_{\mathcal{K}}$ and $\psi_{\mathcal{K}}$ at the origin and taking the inverse Kelvin transform. Observe that at least one of the coefficients $z_\alpha$ in \eqref{eq:uexpansion} must be non-zero, and the same holds for the $\zeta_\alpha$ in  \eqref{eq:psiexpansion}, as by unique continuation principle a non-trivial solution to \eqref{eq:debubbles} cannot have derivative of arbitrary order vanishing at a point ( see \cite{aronszajn1956unique} for the Laplacian and \cite{Kim-ProcAms95} concerning the Dirac operator).
%%%%%%%%%%%%%%%%%%%%%%%%%%%%%%%%%%%%%%%%%%%%%%%%%%%%%%%%%%%%%%%%%%%%%%%%%%%%%%%%%%%%%%%
%%%%%%%%%%%%%%%%%%%%%%%%%%%%%%%%%%%%%%%%%%%%%%%%%%%%%%%%%%%%%%%%%%%%%%%%%%%%%%%%%%%%%%%
\section{Classification of ground states}
\begin{proof}[Proof of Theorem \ref{thm:groundstates}]
Let $(u,\psi)\in H^1(\sph^3)\times H^{1/2}(\Sigma_{g_0}\sph^3)$ be a ground state solution to \eqref{eq:spherebubbles}, with $u\geq0$. By the strong maximum principle one infers that actually $u>0$.

Then consider the conformal change of metric
\be\label{eq;newmetric}
g:=\frac{4}{9}u^4g_0\,,\qquad\mbox{on $\sph^3$,}
\ee
and the corresponding isometry of spinor bundles $\beta:\Sigma_{g_0}\sph^3\to\Sigma_{g}\sph^3$

By formulas \eqref{eq:covariantoperators}
\be\label{eq:newspinors}
v:=\sqrt{\frac{3}{2}}\,,\qquad \varphi:=\frac{3}{2}u^{-2}\beta(\psi)\,,
\ee
solves equations \eqref{eq:spherebubbles} in the metric $g$, that is
\be\label{eq:gequation}
\left\{\begin{aligned}
   \Lap_{g} v=\vert \varphi\vert^2 v\,, \\ 
   \D_{g}\varphi=\frac{3}{2}\varphi \,.
\end{aligned}\right.\,\qquad \mbox{on $\sph^3$.}
\ee
By the definition of the Yamabe invariant, we get
\[
\YM(\sph^3,[g_0])\left( \int_{\sph^3}u^6\dd\vol_{g_0}\right)^{1/3}\leq \int_{\sph^3}\overline{u}\Lap_{g_0}u\dd\vol_{g_0}= \int_{\sph^3}u^2\vert\varphi\vert^2\dd\vol_{g_0}\,,
\]
and since $(u,\psi)$ is a ground state, using \eqref{eq:equality} we find
\[
\left( \int_{\sph^3}u^6\dd\vol_{g_0}\right)^{1/3}\leq\lambda^+(\sph^3,g_0)\,.
\]
Observe that the volume of the metric $g$ is 
\[
\vol(\sph^3,g)=\frac{8}{27}\int_{\sph^3}u^6\dd\vol_{g_0}
\]
The Hijazi's inequality \cite{hijazi,hijazi91} gives
\[
\frac{3}{8}\YM(\sph^3,g_0)\leq\frac{9}{4}\vol(\sph^3,g)^{2/3}=\left( \int_{\sph^3}u^6\dd\vol_{g_0}\right)^{2/3}\leq\lambda^+(\sph^3,g_0)^2\,.
\]
But since the last and the first term of these inequalities coincide, we have equality in the Hijazi inequality, and thus the spinor $\varphi$ is Killing, with constant $-\frac{1}{2}$, by the second equation in \eqref{eq:spherebubbles}.

Combining this with the second equation in \eqref{eq:spherebubbles}, we conclude that $\varphi$ is a twistor spinor, so that by \cite[Prop. A.2.1]{diracspectrum}
\[
\frac{9}{4}\varphi=(\D_g)^2\varphi=\frac{3}{8}R_g\varphi\,,
\]
and the scalar curvature of $g$ is then $R_g=R_{g_0}=6$. By a result of Obata \cite{obata1971theconjectures} there exists an isometry
\[
f:(\sph^3,g)\to(\sph^3,g_0)\,,
\]
such that $f^*g_0=g=\frac{4}{9}u^4g_0$. Then we obtain 
\[
\dd\vol_{f^*g_0}=\det(\dd f)=\left( \frac{8}{27}u^6\right)\dd\vol_{g_0}\implies u=\sqrt{\frac{3}{2}}\det(\dd f)^{1/6}\,.
\]
Recall that the isometry $f$ induces an isometry of spinor bundles $F$, so that the following diagram is commutative
\begin{center}
 \begin{tikzcd}
  (\Sigma_{f^*g_0}\sph^3, f^*g_0) \arrow[r, "F"] \arrow[d]& (\Sigma_{g_0}\sph^3,g_0) \arrow[d] \\
  (\sph^3,f^*g_0) \arrow[r, "f"] & (\sph^3,g_0) 
 \end{tikzcd}\,,
\end{center}
where the vertical arrows are the projections defining the spinor bundles.

Then the spinor $\Psi:=F\circ\varphi\circ f^{-1}$ is $-\frac{1}{2}$-Killing with respect to the round metric $g_0$, as this property is clearly preserved by isometries. Thus we can rewrite 
\[
\varphi=F^{-1}\circ\Psi\circ f\equiv f^*\Psi\,,
\]
and
\[
\psi=\frac{2}{3}u^2\beta^{-1}(\varphi)=\det(\dd f)^{1/3}\beta^{-1}(f^*\Psi)\,.
\] 
\end{proof}

\begin{proof}[Proof of Corollary \ref{cor:explicitbubbles}]
Formula \eqref{eq:scalarbubble} is the well-known one for standard bubbles for the classical Yamabe problem \cite{caffarelli1989asymptotic}.

So we only deal with the spinorial part of the solution. As recalled in \eqref{eq:Killing on Rn}, the pullback on $\R^3$ of $-\frac{1}{2}$-Killing spinors on the round sphere is given by
\be\label{eq:killing}
\Psi(x)
 =\parenthesis{\frac{2}{1+|x|^2}}^{\frac{3}{2}}\parenthesis{\mathds{1}-\gamma_{_{\R^3}}(x)}\Phi_0\,,\qquad x\in\R^3\,,
\ee
where $\Phi_0\in\C^4$. The other solutions to the second equation in \eqref{el} are obtained from those of the form \eqref{eq:killing} by applying conformal transformations of the euclidean space $(\R^3,g_{\R^3})$. 
\smallskip

We deal first with the composition of translation and a scaling. For fixed $x_0\in\R^3, \lambda>0$, consider the map $f_{x_0,\lambda}\colon \R^3 \to \R^3$ defined by
\begin{equation}
 f_{x_0,\lambda}(x)\coloneqq \frac{x- x_0}{\lambda}\,,\qquad x\in\R^3\,.
\end{equation}
Then $f^*_{x_0,\lambda}g_{\R^3}=\lambda^{-2}g_{\R^3}$. Recall that the frame bundle is trivial, i.e. $P_{SO}(\R^3,g_{\R^3})=\R^3\times SO(3)$, so that $f_{x_0,\lambda}$ induces a map $\widetilde{f}_{x_0,\lambda}:\R^3\times SO(3)\to\R^3\times SO(3)$, with 
 \[
 \widetilde{f}_{x_0,\lambda}(x,v_1,v_2,v_3)=(f_{x_0,\lambda}(x),v_1,v_2,v_3)\,,
 \]
 acting as the identity on $SO(3)$, which then lifts to a map on $P_{Spin}(\R^3,g_{\R^3})=\R^3\times Spin(3)$ which also acts as the identity on the second component. Since also the spinor bundle is trivial, that is $\Sigma\R^3=\R^3\times\C^4$, we finally get a map $F_{x_0,\lambda}:\R^3\times\C^4\to\R^3\times\C^4$, and the transformation on $\Psi$ is given by
 \begin{align}
 \psi(x)
 =&\beta_{\lambda^{-2}g_{\R^3}, g_{\R^3}} F_{x_0,\lambda}^{-1}\Psi(f_{x_0,\lambda}(x)) \\
 =& \parenthesis{\frac{2\lambda}{\lambda^2+|x-x_0|^2}}^{\frac{3}{2}}
 \beta_{\lambda^{-2}g_{\R^3}, g_{\R^3}} F_{x_0,\lambda}^{-1}
 \parenthesis{\mathds{1}-\gamma_{_{\R^3}}\parenthesis{\frac{x-x_0}{\lambda}}}
 \Phi_0\,,
\end{align}
where $\beta_{\lambda^{-2}g_{\R^3}, g_{\R^3}}$ is the conformal identification of spinors induced by the conformal change of metric.

By the above discussion we can take $\beta_{\lambda^{-2}g_{\R^3}, g_{\R^3}} F_{x_0,\lambda}^{-1}$ to be the identity map, so that \eqref{eq:spinorialbubble} holds. 
\smallskip

To conclude the proof we need to prove that the transformations on spinors of the form \eqref{eq:spinorialbubble}, induced by a rotation on $\R^3$, give another spinor of the same form suitably choosing new parameters. Let $R\in SO(3)$ and consider the induced map $F_R:\Sigma\R^3\to\Sigma\R^3$. Taking a spinor of the form \eqref{eq:spinorialbubble} we get
\be\label{eq:spinorrotation}
\begin{split}
    F^{-1}_R(\psi(Rx))&=\parenthesis{\frac{2\lambda}{\lambda^2+|Rx-x_0|^2}}^{\frac{3}{2}}
 \beta_{\lambda^{-2}g_{\R^3}, g_{\R^3}} F_R^{-1}
 \parenthesis{\mathds{1}-\gamma_{_{\R^3}}\parenthesis{\frac{Rx-x_0}{\lambda}}}
 \Phi_0 \\
 & =\parenthesis{\frac{2\lambda}{\lambda^2+|x-R^{-1}x_0|^2}}^{\frac{3}{2}}
 \parenthesis{\mathds{1}-\gamma_{_{\R^3}}\parenthesis{\frac{x-R^{-1}x_0}{\lambda}}}
 F_R^{-1}(\Phi_0)\,,
    \end{split}
\ee
where we have used the fact that, given $v\in\R^3, \psi\in\Sigma\R^3$, there holds $F_R\parenthesis{\gamma_{_{\R^3}}(v)\psi}= \gamma_{_{\R^3}}(Rv)F_R(\psi)$. Then the claim follows, as \eqref{eq:spinorrotation} gives a spinor of the \eqref{eq:spinorialbubble}, with parameters $R^{-1}x_0\in\R^3$, $\lambda>0$ and $F_R^{-1}(\Phi_0)\in\Sigma\R^3$.
\end{proof}
%%%%%%%%%%%%%%%%%%%%%%%%%%%%%%%%%%%%%%%%%%%%%%%%%%%%%%

\end{document}